\newcommand{\drafT}{19 July 2011}
\newtheorem{theo}{Theorem}[section]
\newtheorem{proposition}[theo]{Proposition}
\newtheorem{conjecture}[theo]{Conjecture}
\newtheorem{corollary}[theo]{Corollary}
\newenvironment{proof}{\begin{trivlist}\item[]\textbf{Proof}\mbox{ \ }}%
       {\qquad\hspace*{\fill}$\Box$\end{trivlist}}
\newenvironment{proofpl}[1]{\begin{trivlist}\item[]\textbf{Proof of
      #1}\mbox{ \ }}%
       {\qquad\hspace*{\fill}$\Box$\end{trivlist}}
\newcommand{\qite}[1]{\noindent\leavevmode\hangindent1.5\parindent%
        \noindent\hbox to1.5\parindent{#1\hss}\ignorespaces}
\newcommand{\dgon}{\mbox{$D$-gon}}
\newcommand{\Eb}{E^B}
\newcommand{\Ep}{E_\phi}
\newcommand{\Eps}[1]{E_{\phi_#1}}
\newcommand{\Eu}{E^U}
\newcommand{\cl}{\mathrm{cl}}
\newcommand{\spn}{\mathrm{cl}}
\newcommand{\spnn}[1]{\mathrm{cl}\bigl(#1\bigr)}
\newcommand{\UC}{\Upsilon_c}
\newcommand{\UF}{\Upsilon\!_f}
\newcommand{\UI}{\Upsilon}
\begin{document}

\title{\textbf{Cyclic Orderings and Cyclic Arboricity of Matroids}}

\author{\quad\\
  Jan van den Heuvel\,$^\ast$ \ and \ St\'ephan
  Thomass\'e\,$^\dagger$\\[3mm]
  $^\ast$\,\emph{Department of Mathematics, London School of
    Economics, London, UK}\\[1mm]
  $^\dagger$\,\emph{Universit\'e Montpellier II - CNRS, LIRMM, 
    Montpellier, France}}

\date{\mbox{}\\[3mm]\drafT}

\maketitle

{\renewcommand{\thefootnote}{\relax}
  \footnotetext{Part of the research for this paper was done during a visit
    of JvdH to LIRMM-Universit\'e Montpellier II\@. JvdH's visit was made
    possible by grants from the Alliance Programme of the British Council
    and from the R\'egion Languedoc-Roussillon.

    \hskip7.5pt email\,: \texttt{jan\,@\,maths.lse.ac.uk},
    \texttt{thomasse\,@\,lirmm.fr}}}

\begin{abstract}
\noindent
We prove a general result concerning cyclic orderings of the elements of a
matroid. \emph{For each matroid~$M$, weight function
  $\omega:E(M)\rightarrow\mathbb{N}$, and positive integer~$D$, the
  following are equivalent.} (1)~\emph{For all $A\subseteq E(M)$, we have
  $\sum_{a\in A}\omega(a)\le D\cdot r(A)$.} (2)~\emph{There is a map~$\phi$
  that assigns to each element~$e$ of~$E(M)$ a set~$\phi(e)$ of~$\omega(e)$
  cyclically consecutive elements in the cycle $(1,2,\ldots,D)$ so that
  each set $\{\,e\mid i\in\phi(e)\,\}$, for $i=1,\ldots,D$, is
  independent.}

As a first corollary we obtain the following. \emph{For each matroid~$M$
  such that~$|E(M)|$ and~$r(M)$ are coprime, the following are equivalent.}
(1)~\emph{For all non-empty $A\subseteq E(M)$, we have
  $|A|/r(A)\le|E(M)|/r(M)$.} (2)~\emph{There is a cyclic permutation
  of~$E(M)$ in which all sets of~$r(M)$ cyclically consecutive elements are
  bases of~$M$.} A second corollary is that the circular arboricity of a
matroid is equal to its fractional arboricity.

These results generalise classical results of Edmonds, Nash-Williams and
Tutte on covering and packing matroids by bases and graphs by spanning trees.

\medskip\noindent
\textbf{Keywords}\,: matroid, base of a matroid, cyclic ordering,
arboricity, circular arboricity
\end{abstract}

\section{Introduction and Results}

\subsection{Cyclic Orderings of Matroids}

We assume the reader is familiar with the basics of matroid theory, as can
be found in, e.g., the book of Oxley~\cite{JO}. All matroids in this paper
are assumed to be finite and without loops. We use~$E$ for the ground set
of a matroid under consideration.

The crucial axiom on the bases of a matroid is the \emph{exchange axiom}.
There are several forms of this (\,see~\cite{JO}\,); the important one for
us is\,: if~$B$ and~$B'$ are bases and $b'\in B'\setminus B$, then there
exists $b\in B\setminus B'$ such that $(B\setminus\{b\})\cup\{b'\}$ is a
base. This axiom implies that given two bases~$B$ and~$B'$, there exists a
sequence of exchanges that transforms~$B$ into~$B'$, i.e., there is a
sequence of bases $B=B_0,\dots,B_i,\dots,B_r=B'$ for which the symmetric
difference of every two consecutive bases has two elements. If~$B$ and~$B'$
are disjoint (\,hence~$r$ is equal to the rank~$r(M)$\,), one way to get
such a sequence is as follows. Let $B'=\{b'_1,b'_2,\dots,b'_r\}$. Take
$b_1\in B$ such that $(B\setminus\{b_1\})\cup\{b'_1\}$ is a base, then take
$b_2\in B$ such that $(B\setminus\{b_1,b_2\})\cup\{b'_1,b'_2\}$ is a base,
etc. We obtain a sequence $(b_1,b_2,\dots,b_r,b'_1,b'_2,\dots,b'_r)$ in
which every set of~$r$ consecutive elements forms a base (\,indeed, these
can be taken as the sets~$B_i$\,).

Studying the structure of symmetric exchanges in matroids, Gabow~\cite{Gab}
asked if it is possible to choose the sequence
$(b_1,\dots,b_r,b'_1,\dots,b'_r)$ so that it provides a \emph{cyclic
  ordering} in which each~$r$ cyclically consecutive elements form a base;
i.e., the sequences $(b'_2,\dots,b'_r,b_1)$, $(b'_3,\dots,b'_r,b_1,b_2)$,
etc., form bases as well. This question was raised again by
Wiedemann~\cite{DW84} and formulated as a conjecture by Cordovil and
Moreira~\cite{CM}.

\begin{conjecture}[\,\cite{CM,Gab,DW84}\,]\label{c:2base2}\mbox{}\\*
  Let $B=\{b_1,\dots,b_r\}$ and $B'=\{b'_1,\dots,b'_r\}$ be two bases of a
  matroid. There is a permutation $(b_{\pi(1)},\ldots,b_{\pi(r)})$ of the
  elements of~$B$ and a permutation $(b'_{\pi'(1)},\ldots,b'_{\pi'(r)})$ of
  the elements of~$B'$ such that the combined sequence
  $(b_{\pi(1)},\ldots,b_{\pi(r)},b'_{\pi'(1)},\ldots,b'_{\pi'(r)})$ is a
  cyclic ordering in which every~$r$ cyclically consecutive elements form a
  base.
\end{conjecture}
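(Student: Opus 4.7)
The plan is to deduce Conjecture~\ref{c:2base2} from the Main Theorem announced in the abstract, applied to the restriction $M' = M|(B \cup B')$ with suitably chosen parameters. First I would reduce to the disjoint case: if $B \cap B' \neq \emptyset$, pass to the parallel extension in which each common element is replaced by two parallel copies, one labelled as belonging to $B$ and one to $B'$. A valid cyclic ordering in the extension descends to one in $M$, since two parallel copies cannot lie together in a window that is a base, and hence in any good ordering they automatically sit at cyclic distance exactly $r$.

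Next, apply the Main Theorem to $M'$ with weight function $\omega \equiv r$ and cycle length $D = 2r$. The fractional hypothesis $r \cdot |A| \le 2r \cdot r(A)$, i.e.\ $|A| \le 2\,r(A)$, holds because $A = (A \cap B) \cup (A \cap B')$ decomposes into two independent sets, forcing $r(A) \ge \max(|A \cap B|, |A \cap B'|) \ge |A|/2$. The theorem therefore produces a map $\phi$ assigning each $e \in B \cup B'$ a cyclic arc $\phi(e)$ of length $r$ inside $\{1, \ldots, 2r\}$ such that every $S_i := \{e : i \in \phi(e)\}$ is independent. Counting $\sum_i |S_i| = 2r \cdot r = 2r^2$ against $|S_i| \le r(M') = r$ forces $|S_i| = r$ for every $i$, so each $S_i$ is in fact a base of $M'$.

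If the starting points of the arcs $\phi(e)$ turned out to be pairwise distinct, they would form a bijection $\sigma : B \cup B' \to \{1, \ldots, 2r\}$, and listing the elements in cyclic order of $\sigma(e)$ would yield a cyclic ordering whose $j$th window of $r$ consecutive elements is exactly $S_{j+r-1}$, a base; this is precisely the conclusion of Conjecture~\ref{c:2base2}.

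The hard part---and the step where I expect the main effort to go---is arranging for the starts to be pairwise distinct. Let $n_j$ denote the number of arcs starting at position $j$. The constraint $|S_i| = |S_{i+1}| = r$ forces $n_{i+1} = n_{i+1-r}$, so $(n_j)$ is $r$-periodic with $\sum_{j=1}^{r} n_j = r$; distinct starts amount to $n_j \equiv 1$. I would attack this by a local rotation argument: whenever $n_j \ge 2$ and $n_{j'} = 0$, pick an arc starting at $j$ and attempt to slide its left endpoint to $j'$, using symmetric exchanges between consecutive bases $S_i$ and $S_{i+1}$ (which differ only in the elements entering or leaving at position $i+1$) to compensate and keep every $S_i$ independent. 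The obstacle is that such a local slide may not be individually realisable and may force coordinated shifts of several arcs; making this rotation procedure work uniformly over all matroids---or alternatively, realising the set of admissible $\phi$ as an integer polytope and locating a vertex with $n_j \equiv 1$---is what makes Conjecture~\ref{c:2base2} substantially more delicate than the Main Theorem itself.
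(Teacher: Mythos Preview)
The statement you are attempting is Conjecture~\ref{c:2base2}, which the paper does \emph{not} prove; it is presented as open. The paper's route from Theorem~\ref{t::cychi-a} to an honest cyclic ordering is Theorem~\ref{t::main1-a}, and that step hinges on $\gcd(|E|,r(M))=1$: coprimality is exactly what forces the arc-starts to be a bijection. For two disjoint bases one has $|E|=2r$, $r(M)=r$, so $\gcd(|E|,r(M))=r$, and the argument is unavailable. Your ``hard part'' is precisely this obstruction, and the paper offers no way around it; the authors themselves remark in the acknowledgements that an earlier draft erroneously claimed Conjecture~\ref{c::main1} in full. So your sliding/rotation heuristic is not filling a gap the paper handles by other means---it is attacking a problem the paper leaves open.

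There is a second, more basic mismatch. Even if you succeeded in making the starts distinct, you would obtain a cyclic ordering of $B\cup B'$ in which every $r$ consecutive elements form a base---that is Conjecture~\ref{c:2base}, not Conjecture~\ref{c:2base2}. The latter requires the ordering to place all of~$B$ in one block of~$r$ consecutive positions and all of~$B'$ in the complementary block. Nothing in Theorem~\ref{t::cychi-a}, and nothing in your rotation argument, controls which of~$B$ or~$B'$ the element at a given position comes from; your parallel-extension trick only forces the two copies of a \emph{single} common element to sit antipodally, not the two bases as wholes. So a successful completion of your programme would still leave the gap between Conjectures~\ref{c:2base} and~\ref{c:2base2} untouched.
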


\noindent
Conjecture~\ref{c:2base2} has been proved for graphical matroids
\cite{CM,KUM88,DW84}.

A possible easier conjecture is that a suitable cyclic ordering can be
obtained by permuting all elements in the union of the two bases.

\begin{conjecture}\label{c:2base}\mbox{}\\*
  Given two bases $B=\{b_1,\dots,b_r\}$ and $B'=\{b'_1,\dots,b'_r\}$ of a
  matroid, there is a permutation of the sequence
  $(b_1,b_2,\dots,b_r,b'_1,b'_2,\dots,b'_r)$ in which every~$r$ cyclically
  consecutive elements form a base.
\end{conjecture}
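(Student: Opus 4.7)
The plan is to reduce Conjecture~\ref{c:2base} to the main theorem of the paper. I would form the matroid $M'$ on the disjoint union $B \sqcup B'$ by inheriting the rank function of $M$ through the projection $\pi : B \sqcup B' \to B \cup B'$; the two pre-images of each element of $B \cap B'$ are then parallel, $|E(M')| = 2r$, and $r(M') = r$. Apply the main theorem to $M'$ with $D = 2r$ and uniform weight $\omega \equiv r$. Condition~(1) is immediate: for any $A \subseteq B \sqcup B'$, both $A \cap B$ and $A \cap B'$ are independent in $M'$, so $r_{M'}(A) \ge \max(|A \cap B|, |A \cap B'|)$, giving $|A| \le 2\,r_{M'}(A)$ and hence $\sum_{a \in A} \omega(a) = r\,|A| \le 2r \cdot r_{M'}(A)$.

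Condition~(2) then supplies a map $\phi$ assigning each $e \in B \sqcup B'$ an arc of $r$ cyclically consecutive positions in $\{1,\dots,2r\}$, with every fibre $S_i = \{\,e : i \in \phi(e)\,\}$ independent in $M'$. Since the total weight $2r \cdot r$ matches $D \cdot r(M')$ exactly, each $|S_i| = r$, so every $S_i$ is a base of $M'$ and $\pi(S_i)$ is a base of $M$. Writing $a(e)$ for the starting position of $\phi(e)$, one has $S_j = \{\,e : a(e) \in \{j-r+1,\dots,j\}\,\}$. If the map $a : B \sqcup B' \to \{1,\dots,2r\}$ is a bijection, then placing $e$ at position $a(e)$ produces the desired cyclic ordering: the $r$ consecutive elements at positions $i,\dots,i+r-1$ are precisely $\pi(S_{i+r-1})$, a base of $M$. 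Two parallel pre-images cannot share any fibre, so their arcs are forced to be complementary and their starting positions differ by exactly $r$; hence injectivity of $a$ on parallel pairs comes for free, which is also consistent with the obvious fact that two copies of the same element of $B \cap B'$ must lie at distance at least $r$ in the target cyclic ordering.

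The main obstacle is to force $a$ to be injective on the \emph{non-parallel} elements too: two non-parallel elements may legitimately share an arc, since each fibre is only required to be a base and can comfortably accommodate both. My approach would be an exchange argument. Among all valid $\phi$, choose one minimising $\sum_j f_j^2$ where $f_j = |a^{-1}(j)|$; if some $f_j \ge 2$ and $f_k = 0$, attempt a cyclic rotation of arcs along an augmenting sequence of symmetric basis exchanges between consecutive fibres $S_i, S_{i+1}$ that shifts one arc's starting position from $j$ toward $k$ while keeping every fibre a base. Producing and controlling such an augmenting sequence in general is the crux of the problem, and is precisely the step that I do not see how to carry out without further information about $M$.
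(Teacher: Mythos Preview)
Your reduction is exactly the paper's own strategy for Theorem~\ref{t::main1-a}: apply Theorem~\ref{t::cychi-a} with $D=|E|$ and $\omega\equiv r$, deduce that every fibre has size exactly~$r$ and hence is a base, and then argue that the starting positions $a(e)$ give a bijection to $[D]$. The paper's argument for that last step, however, is precisely the place where the hypothesis $\gcd(|E|,r)=1$ is used: if some position~$x$ is missed by~$a$, then so are $x+r,x+2r,\ldots$, and coprimality forces every position to be missed. In your setting $|E(M')|=2r$ and $\gcd(2r,r)=r$, so this step simply does not go through, and nothing in Theorem~\ref{t::cychi-a} prevents all elements of~$B$ from being mapped to one arc and all of~$B'$ to the complementary arc.

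The obstacle you isolate is therefore not a technicality but the heart of the matter: Conjecture~\ref{c:2base} is \emph{not} proved in the paper. It is stated as an open conjecture, weaker than Conjecture~\ref{c:2base2} and implied by Conjecture~\ref{c::main1}, of which the paper only establishes the coprime case (see the acknowledgement, where the authors note that an earlier draft erroneously claimed the full Conjecture~\ref{c::main1}). Your proposed exchange/augmenting argument to repair the non-injectivity of~$a$ would, if it worked, prove something the paper explicitly leaves open; there is no proof in the paper for you to compare against.
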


\noindent
Since Conjecture~\ref{c:2base2} is known to hold for graphical matroids, so
does this weaker conjecture.

It is obvious that the linear ordering from the introductory paragraphs
exists for any number of bases. Hence a natural generalisation of the
previous conjectures is to start with $k\ge2$ bases and require a suitable
cyclic ordering of the elements of these~$k$ bases combined. No results are
known for $k\ge3$, not even for graphical matroids.

Kajitani \emph{et al.}~\cite{KUM88} formulated the following much more
general conjecture.

\begin{conjecture}[\,\cite{KUM88}\,]\label{c::main1}\mbox{}\\*
  For a loopless matroid~$M$, there is a cyclic ordering of~$E$ such that
  every $r(M)$ cyclically consecutive elements are bases of~$M$ if and only
  if the following condition is satisfied\,:
  \begin{equation}\label{cond1}
    \text{for all non-empty $A\subseteq E$, we have
      $\dfrac{|A|}{r(A)}\:\le\:\dfrac{|E|}{r(M)}\hskip1pt$.}
  \end{equation}
\end{conjecture}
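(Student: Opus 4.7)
The plan is to deduce Conjecture~\ref{c::main1} from the general theorem stated in the abstract (call it Theorem~A) by specialising to $D=|E|$ and the constant weight $\omega(e)=r(M)$ for every $e\in E$. Necessity of~\eqref{cond1} is a one-line double count: in any cyclic ordering with each of the $|E|$ windows~$W$ of $r(M)$ consecutive elements being a base, $|W\cap A|\le r(A)$ for every non-empty $A\subseteq E$ (since $W\cap A$ is independent in~$A$), and each $a\in A$ lies in exactly $r(M)$ windows, giving $r(M)\,|A|\le|E|\,r(A)$.

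For sufficiency, condition~(1) of Theorem~A with these parameters reads
\[
\sum_{a\in A}r(M)\:\le\:|E|\cdot r(A),\qquad\text{i.e.,}\qquad \frac{|A|}{r(A)}\:\le\:\frac{|E|}{r(M)},
\]
which is precisely~\eqref{cond1}. Theorem~A then supplies a map $\phi$ assigning each $e$ a set $\phi(e)$ of $r(M)$ cyclically consecutive positions in $(1,\ldots,|E|)$ so that each $I_i:=\{e\mid i\in\phi(e)\}$ is independent. Counting $\sum_i|I_i|=\sum_e|\phi(e)|=|E|\cdot r(M)$ and comparing with $|I_i|\le r(M)$ forces every $|I_i|=r(M)$, so each $I_i$ is a base.

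To read a cyclic ordering off $\phi$, label each $e$ by the endpoint $\mathrm{end}(e)$ with $\phi(e)=\{\mathrm{end}(e)-r(M)+1,\ldots,\mathrm{end}(e)\}\pmod{|E|}$ and list elements in endpoint order; the $r(M)$-windows of the resulting cyclic sequence then coincide with the sets~$I_i$. The multiplicities $n_i=|\mathrm{end}^{-1}(i)|$ satisfy $\sum_{j=0}^{r(M)-1}n_{i-j}=r(M)$ for every~$i$; subtracting consecutive equations gives $n_{i+r(M)}=n_i$, so $n$ is periodic with period dividing $\gcd(|E|,r(M))$. When this $\gcd$ is~$1$, all $n_i$ equal~$1$ and the ordering falls out—this already yields the coprime corollary from the abstract. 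In general, an extra step is needed to choose $\phi$ so that $\mathrm{end}$ is a bijection, by shifting arcs whose endpoints coincide into unused endpoint slots without spoiling independence at any position.

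The heart of the argument, and the main obstacle, lies in Theorem~A itself. I would establish its sufficiency by a potential-minimising exchange argument: among all admissible $\phi$ (each $\phi(e)$ a cyclic arc of length~$\omega(e)$), pick one minimising $\sum_i(|I_i|-r(I_i))$; if this is positive, some $I_i$ contains a circuit, and rotating a carefully chosen $\phi(e)$ by one position either strictly decreases the potential—contradicting minimality—or propagates the dependency to a neighbouring position, eventually exposing a set $A\subseteq E$ with $\sum_{a\in A}\omega(a)>D\cdot r(A)$, contradicting~(1). This uncrossing/augmenting step is the cyclic analogue of Edmonds' matroid partition argument and carries essentially all the technical weight, with the endpoint redistribution above being the additional twist required to obtain the full non-coprime form of the conjecture.
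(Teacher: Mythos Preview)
Your plan recovers exactly what the paper actually proves, but not what the conjecture asserts. The paper does \emph{not} prove Conjecture~\ref{c::main1}; it proves only the coprime case (Theorem~\ref{t::main1}), and the acknowledgement even notes that an earlier version claiming the full conjecture contained a serious error. Your derivation of the coprime case from Theorem~A with $D=|E|$ and $\omega\equiv r(M)$, together with the periodicity argument forcing all $n_i=1$ when $\gcd(|E|,r(M))=1$, is precisely the paper's route (Theorem~\ref{t::main1-a}).

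The genuine gap is the sentence ``In general, an extra step is needed to choose $\phi$ so that $\mathrm{end}$ is a bijection, by shifting arcs whose endpoints coincide into unused endpoint slots without spoiling independence at any position.'' This is not an extra step; it is the entire open problem. Theorem~A guarantees only that each $I_i$ is independent (hence a base, by your counting), but it does \emph{not} guarantee that the map $e\mapsto\phi(e)$ (or $e\mapsto\mathrm{end}(e)$) is injective. When $\gcd(|E|,r(M))>1$, the arcs can pile up: several elements may share the same arc, with other positions uncovered by any endpoint. Shifting one such arc by a single position removes its element from one $I_i$ and inserts it into another; the new $I_i$ then has $r(M)+1$ elements and is certainly dependent, so the ``without spoiling independence'' clause is doing unjustified work. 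One would need a simultaneous redistribution that keeps every window a base, and no such argument is known---indeed, even the special case of two disjoint bases (Conjecture~\ref{c:2base}) is open for general matroids.

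Your sketch of Theorem~A via a potential $\sum_i(|I_i|-r(I_i))$ and single-step rotations is in the same spirit as the paper's proof, which also pushes arcs one step at a time, but the paper's argument is more delicate: it fixes a \emph{best possible} $\phi$ (maximal pointwise closures), runs an infinite deterministic sequence of pushes, separates elements into bounded and unbounded, and shows the unbounded set is a flat whose contraction is handled by induction. Your ``eventually exposing a set $A$ with $\omega(A)>D\cdot r(A)$'' is plausible heuristics but would need that structural analysis to be made rigorous.
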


\noindent
Following Catlin \emph{et al.}~\cite{CGHL}, matroids that satisfy
condition~\eqref{cond1} are called \emph{uniformly dense}.

The fact that~\eqref{cond1} is necessary for the required cyclic ordering
of the ground set to exist was already observed by Kajitani \emph{et
  al.}~\cite{KUM88}. They also proved Conjecture~\ref{c::main1} for the
cycle matroids of some special classes of graphs.

A matroid admitting a partition of its ground set into bases is uniformly
dense. Hence Conjecture~\ref{c::main1} implies Conjecture~\ref{c:2base}
(\,and the generalisation of this conjecture with an arbitrary number of
bases\,). Conjecture~\ref{c::main1} also implies that if~$M$ is uniformly
dense and $|E|/r(M)=P/Q$ (\,$P,Q\in\mathbb{N}$\,), then there exist~$P$
bases such that each element of~$E$ appears in exactly~$Q$ of them. This
weaker result was proved by Catlin \emph{et al.}~\cite{CGHL} and Fraisse
and Hell~\cite{FH}.

We prove Conjecture~\ref{c::main1} for a special class of matroids.

\begin{theo}\label{t::main1}\mbox{}\\*
  Let~$M$ be a loopless matroid such that~$|E|$ and~$r(M)$ are coprime
  (\,i.e., $\gcd(|E|,r(M))=1$\,). There is a cyclic ordering of~$E$ such
  that every $r(M)$ cyclically consecutive elements are bases of~$M$ if and
  only if~\eqref{cond1} holds.
\end{theo}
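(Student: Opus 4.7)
The plan is to derive Theorem~\ref{t::main1} as a corollary of the general equivalence theorem stated in the abstract of the paper (which I take as given). Necessity of~\eqref{cond1} is already noted before the theorem, so I focus on sufficiency. Set $n=|E|$ and $r=r(M)$, and apply the general theorem with $D=n$ and the constant weight $\omega(e)=r$ for every $e\in E$. The hypothesis $\sum_{a\in A}\omega(a)\le D\cdot r(A)$ becomes $r\cdot|A|\le n\cdot r(A)$ for every $A\subseteq E$, which is exactly~\eqref{cond1}. The conclusion gives a map $\phi$ that assigns to each $e\in E$ an arc $\phi(e)$ of $r$ cyclically consecutive positions in $(1,\dots,n)$ such that every set $B_i=\{\,e\mid i\in\phi(e)\,\}$ is independent in~$M$.

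A short double-counting argument shows each $B_i$ is in fact a basis: $\sum_{i=1}^{n}|B_i|=\sum_{e\in E}|\phi(e)|=n\,r$, while $|B_i|\le r$, so every $|B_i|$ equals $r$. The coprimality assumption is then used to show that $\phi$ is a bijection between $E$ and the $n$ arcs of length $r$ in $\mathbb{Z}/n\mathbb{Z}$. For each arc $S_j=\{j,j+1,\dots,j+r-1\}$ (indices mod~$n$), let $m(j)$ be the number of $e\in E$ with $\phi(e)=S_j$. Then $|B_i|=\sum_{j=i-r+1}^{i}m(j)=r$ for every $i$; subtracting consecutive equations gives $m(j+1)=m(j+1-r)$ for all $j$, so $m$ is periodic on $\mathbb{Z}/n\mathbb{Z}$ with period dividing both $r$ and $n$, hence dividing $\gcd(n,r)=1$. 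Thus $m$ is constant, and from $\sum_{j}m(j)=n$ I conclude $m\equiv 1$.

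Writing $e_j$ for the unique element with $\phi(e_j)=S_j$, the basis $B_i$ consists of those $e_j$ for which $i\in S_j$, so $B_i=\{e_{i-r+1},e_{i-r+2},\dots,e_i\}$. Consequently the cyclic ordering $(e_1,e_2,\dots,e_n)$ of~$E$ has the property that every $r$ cyclically consecutive elements form a basis, as required. The real obstacle is the general equivalence theorem itself, whose proof must be carried out separately; once that is in hand, the only nontrivial step in the deduction is the periodicity argument above, which is exactly where the coprimality hypothesis turns the arc-assignment $\phi$ into an honest cyclic ordering of~$E$.
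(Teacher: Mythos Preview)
Your proof is correct and follows essentially the same route as the paper: apply the main technical theorem (Theorem~\ref{t::cychi-a}) with $D=|E|$ and constant weight $\omega\equiv r(M)$, use a counting argument to see that every $B_i$ has exactly $r$ elements, and then invoke the coprimality hypothesis to turn the map $\phi$ into a bijection. The only cosmetic difference is in the coprimality step: the paper argues by contradiction (if some position is missed by $\phi$, then so is the position $r$ further along, and iterating empties the image), whereas your periodicity argument shows directly that the multiplicity function $m(j)$ is constant; these are two phrasings of the same sliding-window observation.
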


\noindent
The proof of this theorem can be found in Section~\ref{sec-main1}. It
follows from a more technical result we formulate next.

A \emph{weighted matroid $(M,\omega)$} is a matroid~$M$ together with a
weight function $\omega:E\rightarrow\mathbb{Q}^+$. The weight~$\omega(A)$
of a subset $A\subseteq E$ is the sum of the weights of its elements. For a
positive real number~$d$, let~$S_d$ be the circle with circumference~$d$
(\,interpreted as the interval $[0,d]$ with ends identified, or
equivalently as the quotient $\mathbb{R}/d\,\mathbb{R}$\,). When speaking
about a (\,left-closed, right-open\,) \emph{cyclic interval $[x,y)$}
of~$S_d$, we interpret it as the part of the circle that starts at~$x$ and
follows~$S_d$ in the positive direction until reaching~$y$.

Let $(M,\omega)$ be a weighted matroid and~$\phi$ a mapping from~$E$
to~$S_d$. This mapping associates to every $e\in E$ a cyclic interval
$[\,\phi(e),\,\phi(e)+\omega(e)\,)$ of~$S_d$. Conversely, to every
point~$x$ of~$S_d$, we can associate the set
$\Ep(x)=\{\,e\in E\mid x\in[\,\phi(e),\,\phi(e)+\omega(e)\,)\,\}$.

We are interested in mappings $\phi:E\rightarrow S_d$ such that~$\Ep(x)$ is
independent for every point~$x$ in~$S_d$. It is obvious that for large
enough~$d$ such a mapping always exists. Our main result gives the exact
lower bound on~$d$ for which such a mapping is possible.

\begin{theo}\label{t::cyclickhi}\mbox{}\\*
  Let $(M,\omega)$ be a loopless weighted matroid and~$d$ a positive
  rational number. There exists a mapping $\phi:E\rightarrow S_d$ such that
  $\Ep(x)$ is independent for every point~$x$ in~$S_d$ if and only if the
  following condition is satisfied\,:
  \begin{equation}\label{cond2}
    \text{for all non-empty $A\subseteq E$, we have
      $d\ge\dfrac{\omega(A)}{r(A)}\hskip1pt$.}
  \end{equation}
\end{theo}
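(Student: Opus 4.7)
Necessity follows from a single integration. For any non-empty $A\subseteq E$ we have $\int_{S_d}|A\cap\Ep(x)|\,dx=\omega(A)$, since each $a\in A$ contributes exactly its interval length~$\omega(a)$; on the other hand $A\cap\Ep(x)$ is independent in $M|A$ and hence has size at most $r(A)$ pointwise, so $\omega(A)\le d\cdot r(A)$, which is~\eqref{cond2}.

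For sufficiency, I would first rescale so that $d$ and every $\omega(e)$ is a positive integer, and call a non-empty $A\subseteq E$ \emph{tight} if $\omega(A)=d\cdot r(A)$. The plan is induction on $|E|$, organised around a \emph{tight-splitting} step: if some proper non-empty $A\subsetneq E$ is tight, apply induction to the restriction $(M|A,\omega|_A)$ and to the contraction $(M/A,\omega|_{E\setminus A})$, both with the same~$d$. Condition~\eqref{cond2} transfers to the contraction via $r_{M/A}(B)=r(A\cup B)-r(A)$ combined with $\omega(A)=d\cdot r(A)$, which gives $\omega(B)\le d\cdot r_{M/A}(B)$. The crucial observation is that on a \emph{tight} instance the necessity integration must be saturated pointwise: if $\omega(A)=d\cdot r(A)$ then $|\Eps{A}(x)|=r(A)$ at every $x$, so $\Eps{A}(x)$ is a basis of $M|A$. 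Gluing $\phi_A$ and $\phi_{M/A}$ then yields a valid $\phi$ on~$E$, because a basis of $M|A$ extended by a set independent in $M/A$ is independent in $M$. Two side reductions handle two easy cases: a tight singleton $\omega(e)=d$ forces $e$ to occupy all of~$S_d$, so set $\phi(e)$ arbitrarily and recurse on $(M/e,\omega|_{E\setminus\{e\}})$; and if \emph{no} set is tight we may strictly increase some $\omega(e)$ while preserving~\eqref{cond2}, since a valid mapping for a heavier weight function automatically gives a valid mapping for the original by shortening the corresponding sub-intervals.

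Iterating the weight-increase eventually produces a tight set, and the residual case---the main obstacle---is the minimal instance in which $E$ itself is the unique tight set while no singleton and no proper subset is tight. Here I would apply Edmonds' matroid partition theorem to the parallel blow-up of $(M,\omega)$ in order to obtain $d$ bases $B_1,\dots,B_d$ of $M$ in which each $e$ appears in exactly $\omega(e)$ of them, and then argue that the $B_i$ can be cyclically ordered so that for every $e$ the bases containing $e$ form a cyclic interval. Purely local swaps between adjacent bases easily destroy consecutivity already achieved for other elements, so the proof will have to exploit the extremality of the minimal instance: the absence of a proper tight subset forces strong connectivity of $M$ and should, via a global exchange/rotation argument on the list $B_1,\dots,B_d$, either directly realise the required cyclic arrangement or expose a new proper tight subset to which the splitting step applies, thereby breaking out of the critical case.
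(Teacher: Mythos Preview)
Your necessity argument and the tight-splitting and weight-increase reductions are correct and standard. The genuine gap is the critical case. Having $d$ bases $B_1,\dots,B_d$ in which each $e$ occurs exactly $\omega(e)$ times is indeed immediate from Edmonds, but arranging them cyclically so that every element's occurrences form a cyclic interval is \emph{equivalent} to the statement you are trying to prove in the tight case: such a cyclic list yields $\phi$ by setting $\phi(e)$ to the start of $e$'s block of occurrences, giving $\Ep(x)=B_x$; conversely, when $E$ is tight the pointwise saturation you noted forces every $\Ep(x)$ to be a basis, and these bases are your $B_x$. So after all the reductions you have restated the problem rather than solved it. Your suggested way out---that a failed exchange/rotation process ``should'' expose a proper tight subset---cannot work as stated, because you are precisely in the case where no proper tight subset exists; and you give no mechanism by which stalled swaps would manufacture one. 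Without that mechanism the induction does not close.

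The paper's proof avoids tight sets entirely. It works directly with an arbitrary mapping $\phi$ and the partial order ``$\phi$ is better than $\phi'$'' meaning $\cl(E_{\phi'}(x))\subseteq\cl(\Ep(x))$ for every $x$, takes a best-possible $\phi$, and then runs an infinite deterministic sequence of \emph{pushes}: shift one element's interval by $+1$, the element being chosen only when it lies in a circuit of $\Ep$ at its own left endpoint (so the push never worsens~$\phi$). The elements split into those pushed infinitely often ($\Eu$) and those eventually stationary ($\Eb$), both non-empty. The heart of the argument is that $\cl(\Eu)=\Eu$ and that every circuit of every $\Eps{i}(x)$ lies inside~$\Eu$. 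Hence the stationary mapping $\phi_1|_{\Eb}$ is already a valid mapping for the contraction $M/\Eu$, while the restriction $M\backslash\Eb$ is handled by induction since $\Eu\subsetneq E$. The restriction/contraction split thus emerges from the dynamics of the push process rather than from a pre-existing tight set---exactly the missing idea in your critical case.
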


\noindent
The proof of Theorem~\ref{t::cyclickhi} is given in Section~\ref{sec2}. We
first describe some other corollaries of the theorem in the next subsection.

\subsection{Variants of Arboricity of Matroids}

The \emph{arboricity~$\UI(M)$} of a matroid~$M$ is the minimum number of
bases needed to cover all elements of the matroid. Since every base can
contain at most~$r(A)$ elements for any $A\subseteq E$, the arboricity of a
matroid is at least
$\max\limits_{\varnothing\ne A\subseteq E}\dfrac{|A|}{r(A)}\hskip1pt$. Call
this maximum the \emph{maximal density~$\gamma(M)$}. (\,Notice that~$M$ is
uniformly dense if and only if $\gamma(M)=|E|/r(E)$.\,)

A classical result of Edmonds~\cite{Edm1}, extending the result for graphs
by Nash-Williams~\cite{NW2}, guarantees that this lower bound gives the
right answer.

\begin{theo}[\,Edmonds~\cite{Edm1}\,]\mbox{}\\*
  For a loopless matroid~$M$ we have $\UI(M)=\lceil\gamma(M)\rceil$.
\end{theo}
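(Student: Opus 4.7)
The plan is to deduce the theorem from the material already established, using Theorem~\ref{t::cyclickhi} for the upper bound; the two directions split cleanly.

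The lower bound $\UI(M)\ge\lceil\gamma(M)\rceil$ is an easy counting argument that I would do first. If $B_1,\dots,B_k$ are bases covering $E$, then for every non-empty $A\subseteq E$ one has $|A|\le\sum_{i=1}^k|B_i\cap A|\le k\cdot r(A)$, so $k\ge|A|/r(A)$. Taking the maximum over $A$ and using that $k$ is an integer yields $k\ge\lceil\gamma(M)\rceil$, which is the required inequality.

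For the upper bound $\UI(M)\le\lceil\gamma(M)\rceil$ I would invoke Theorem~\ref{t::cyclickhi} with the constant weight function $\omega\equiv 1$ and $d=D:=\lceil\gamma(M)\rceil$. With this choice, condition~\eqref{cond2} reads $D\ge|A|/r(A)$ for every non-empty $A\subseteq E$, which is exactly $D\ge\gamma(M)$ and thus holds by the definition of~$D$. Theorem~\ref{t::cyclickhi} then produces a map $\phi:E\rightarrow S_D$ assigning to each $e\in E$ a half-open cyclic unit interval $[\phi(e),\phi(e)+1)$ of~$S_D$, with the property that $\Ep(x)$ is independent for every $x\in S_D$.

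The final step is the easy observation that any half-open cyclic interval of length~$1$ inside~$S_D$ contains exactly one of the $D$ integer points $0,1,\dots,D-1$. Consequently $E=\bigcup_{i=0}^{D-1}\Ep(i)$ is a union of $D$ independent sets; extending each $\Ep(i)$ to a base of~$M$ gives a cover of~$E$ by~$D$ bases, and hence $\UI(M)\le D=\lceil\gamma(M)\rceil$. In this derivation there is no genuine obstacle; all the substantive work is already encapsulated in Theorem~\ref{t::cyclickhi}, and the corollary is just a matter of specialising to integral $d$ and unit weights.
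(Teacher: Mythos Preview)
Your argument is correct. Note, however, that the paper does not give its own proof of this statement: it is quoted as a classical theorem of Edmonds and used as background. What you have written is, in effect, the derivation the paper alludes to at the very end when it says that Theorem~\ref{t::main2-a} ``generalises a result of Edmonds~\cite{Edm1}'': combining $\UC(M)\le\gamma(M)$ (your application of Theorem~\ref{t::cyclickhi} with unit weights and $d=\lceil\gamma(M)\rceil$) with the chopping of~$S_D$ into integer pieces (exactly the step in the last paragraph of the proof of Proposition~\ref{prop-crarb}) recovers $\UI(M)\le\lceil\gamma(M)\rceil$.

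One small remark on presentation: since Theorem~\ref{t::cyclickhi} appears \emph{after} Edmonds' theorem in the paper, and since Proposition~\ref{prop-denfr} is justified in the text by appeal to Edmonds' theorem, you should be explicit that your derivation uses only Theorem~\ref{t::cyclickhi} (whose proof is self-contained) and not Proposition~\ref{prop-denfr}, so that there is no circularity. Your write-up already does this, but it is worth flagging.
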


\noindent
The \emph{fractional arboricity~$\UF(M)$} of a matroid~$M$ is defined as
follows. To each base~$B$ of~$M$ assign a real value $x(B)\ge0$, such that
$\sum_{B\ni e}x(B)\ge1$ for all $e\in E$. Then~$\UF(M)$ is the minimum of
$\sum_{B\in\mathcal{B}(M)}x(B)$ we can obtain under these conditions. Again
we have that~$\UF(M)$ is at least the maximal density~$\gamma(M)$, but as
has been observed by several authors (\,see, e.g., Catlin \emph{et
  al.}~\cite{CGHL} and Scheinerman and Ullman \cite[Section~5.4]{SU}\,), it
follows easily from Edmonds' theorem mentioned above that we have equality.

\begin{proposition}\label{prop-denfr}\mbox{}\\*
  For a loopless matroid~$M$ we have $\UF(M)=\gamma(M)$.
\end{proposition}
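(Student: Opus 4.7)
The plan is to prove the two inequalities separately. The direction $\UF(M)\ge\gamma(M)$ is a direct LP-style averaging argument: given any feasible assignment $x(B)\ge 0$ with $\sum_{B\ni e}x(B)\ge 1$ for all $e\in E$, fix any non-empty $A\subseteq E$ and compute
$$|A|\;\le\;\sum_{e\in A}\sum_{B\ni e}x(B)\;=\;\sum_{B}x(B)\cdot|B\cap A|\;\le\;r(A)\cdot\sum_{B}x(B),$$
using only the fact that $|B\cap A|\le r(A)$ since $B\cap A$ is independent. Dividing by $r(A)$ and maximising over $A$ yields $\sum_B x(B)\ge\gamma(M)$, hence $\UF(M)\ge\gamma(M)$.

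For the reverse inequality $\UF(M)\le\gamma(M)$, the plan is to reduce to Edmonds' integer theorem by a replication trick. Write $\gamma(M)=P/Q$ with $P,Q$ positive integers, and let $M^{(Q)}$ be the matroid obtained from $M$ by replacing each $e\in E$ by $Q$ parallel copies. Rank is preserved in the sense that for every $A'\subseteq E(M^{(Q)})$ projecting onto $A\subseteq E$ we have $r_{M^{(Q)}}(A')=r(A)$ and $|A'|\le Q\,|A|$, so $\gamma(M^{(Q)})\le Q\,\gamma(M)=P$. Applying Edmonds' theorem to the loopless matroid $M^{(Q)}$ gives $\UI(M^{(Q)})\le\lceil P\rceil=P$, so $E(M^{(Q)})$ can be covered by $P$ bases $B_1,\dots,B_P$ of $M^{(Q)}$.

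Each $B_j$ contains at most one copy of every element of $E$ (parallel copies are dependent), so it projects to a base $B_j'$ of $M$. Since all $Q$ copies of any $e\in E$ must be covered and no single $B_j$ can contain two of them, at least $Q$ of the bases $B_j'$ contain $e$. Setting
$$x(B)\;=\;\frac{|\{\,j\mid B_j'=B\,\}|}{Q}\qquad\text{for every base $B$ of $M$}$$
then gives a feasible assignment with $\sum_{B}x(B)=P/Q=\gamma(M)$, establishing $\UF(M)\le\gamma(M)$.

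The only real obstacle is the reduction step itself: one must be sure that $M^{(Q)}$ is loopless (which follows from $M$ being loopless) and that the integer covering by bases of $M^{(Q)}$ transfers cleanly to a fractional covering by bases of $M$. Once the replication is set up, both the density bound $\gamma(M^{(Q)})\le P$ and the counting of how many $B_j'$ contain a given $e$ are routine, and Edmonds' theorem does all the heavy lifting.
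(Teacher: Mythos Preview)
Your proof is correct, and it is precisely the kind of derivation from Edmonds' theorem that the paper alludes to when it says the equality ``follows easily from Edmonds' theorem mentioned above'' (the paper does not spell out the argument, citing~\cite{CGHL} and~\cite[Section~5.4]{SU} instead). The replication trick you use to convert an integral base-cover of~$M^{(Q)}$ into a fractional base-cover of~$M$ is the standard way to make this precise, and your averaging inequality for $\UF(M)\ge\gamma(M)$ is exactly the intended lower bound.
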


\noindent
We now define a third kind of arboricity, the \emph{circular
  arboricity~$\UC(M)$}, introduced by Gon\c{c}alves~\cite{Gon}. As before,
let~$S_d$ be the circle with circumference~$d$. Given a matroid~$M$, we
want to map the elements of~$E$ to~$S_d$ so that for every cyclic unit
interval $[\,x,x+1\,)$, the elements mapped to that cyclic interval form an
independent set. Define~$\UC(M)$ as the infimum over the values of~$d$ for
which such a mapping is possible. Since we assume the matroid to be finite
and loopless, it is easy to see that this infimum is actually attained and
is a rational number.\footnote{ We would have the same definition if cyclic
  intervals were open on both sides or left-open, right-closed. With closed
  cyclic unit intervals we get the same value for the circular arboricity,
  but it would be a real infimum in that instance.}

The definition of the circular arboricity mimics that of the \emph{circular
  chromatic number} of a graph. A \emph{stable set} of a graph is a vertex
set in which no pair is adjacent. The minimum number of stable sets to
cover the vertex set of~$G$ is the \emph{chromatic number~$\chi(G)$}, the
fractional variant is the \emph{fractional chromatic number~$\chi_f(G)$},
and the circular variant (\,the minimum~$d$ such that the vertices of~$G$
can be mapped to~$S_d$ so that the elements mapped to any cyclic unit
interval $[\,x,x+1\,)$ form a stable set\,) is the \emph{circular chromatic
  number~$\chi_c(G)$}. See, e.g., Zhu~\cite{Zhu,Zhu2} for results on this
last parameter (\,including other ways to define it\,).

The following result mimics well-known relations between fractional,
circular and integral chromatic number of graphs. For completeness, we give
its proof in Section~\ref{sec-main2}.

\begin{proposition}\label{prop-crarb}\mbox{}\\*
  For a loopless matroid~$M$ we have $\UF(M)\le\UC(M)\le\UI(M)$ and
  $\UI(M)=\lceil\UC(M)\rceil$.
\end{proposition}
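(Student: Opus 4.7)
The plan is to prove the two inequalities directly and then deduce the ceiling identity from Edmonds' theorem via Proposition~\ref{prop-denfr}. The first inequality is an averaging argument, the second a direct construction from the definition of $\UI(M)$, and the third follows by purely formal manipulation.

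For $\UF(M)\le\UC(M)$, let $\phi:E\to S_d$ realise the circular arboricity, so that for every $x\in S_d$ the set $I(x)=\{e\in E:\phi(e)\in[x,x+1)\}$ is independent. For any non-empty $A\subseteq E$ and any $x$, the subset $A\cap I(x)$ is independent, giving the pointwise bound $|A\cap I(x)|\le r(A)$. Integrating over $S_d$ and swapping sum with integral, each $e\in A$ contributes the length of the set $\{x\in S_d:\phi(e)\in[x,x+1)\}$, which is the cyclic half-open interval $(\phi(e)-1,\phi(e)]$ of length exactly~$1$. Hence
\begin{equation*}
|A|\;=\;\int_{S_d}|A\cap I(x)|\,dx\;\le\;r(A)\cdot d,
\end{equation*}
so $d\ge|A|/r(A)$. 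Maximising over $A$ and applying Proposition~\ref{prop-denfr} yields $d\ge\gamma(M)=\UF(M)$.

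For $\UC(M)\le\UI(M)$, set $k=\UI(M)$ and let $B_1,\ldots,B_k$ be bases covering $E$. For each $e\in E$ pick some $i(e)$ with $e\in B_{i(e)}$ and define $\phi(e)=i(e)-1\in S_k$. Any cyclic unit interval $[x,x+1)$ of $S_k$ contains exactly one of the integer positions $0,1,\ldots,k-1$, so $\phi^{-1}([x,x+1))$ is a subset of a single $B_i$ and therefore independent. This gives a valid mapping at $d=k$, so $\UC(M)\le k$.

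Finally, since $\UI(M)$ is an integer and $\UI(M)\ge\UC(M)$, we have $\UI(M)\ge\lceil\UC(M)\rceil$; conversely, Proposition~\ref{prop-denfr} and Edmonds' theorem give $\UI(M)=\lceil\gamma(M)\rceil=\lceil\UF(M)\rceil\le\lceil\UC(M)\rceil$. The only step requiring genuine care is the averaging argument, where one must be precise about the half-open cyclic intervals to ensure each $\phi(e)$ is counted over an interval of length exactly~$1$ (this silently uses $d\ge 1$, which holds in any non-trivial case); everything else is essentially a restatement of definitions, and I do not expect a real obstacle.
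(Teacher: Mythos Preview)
Your proof is correct, but it diverges from the paper's in two of the three steps (the argument for $\UC(M)\le\UI(M)$ is essentially identical).

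For $\UF(M)\le\UC(M)$ you pass through $\gamma(M)$ via an averaging/integration bound and then invoke Proposition~\ref{prop-denfr}. The paper instead builds a fractional base cover directly from the circular arrangement: order the elements as $e_1,\ldots,e_m$ around~$S_d$, extend each set $\{\,e:\phi(e)\in[\phi(e_i),\phi(e_i)+1)\,\}$ to a base~$B_i$, and give~$B_i$ weight equal to the arc length from $\phi(e_i)$ to $\phi(e_{i+1})$. The total weight is exactly~$d$ and every element is covered to weight at least~$1$, yielding $\UF(M)\le d$ without appeal to~$\gamma(M)$.

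For $\UI(M)=\lceil\UC(M)\rceil$ you again go through Edmonds' theorem (via $\UI(M)=\lceil\gamma(M)\rceil$). The paper gives a direct elementary construction: chop~$S_d$ into the $\lceil d\rceil$ arcs $[0,1),[1,2),\ldots,[\lceil d\rceil-1,d)$; each preimage is contained in a unit arc and hence independent, so $\UI(M)\le\lceil d\rceil=\lceil\UC(M)\rceil$.

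The trade-off is that the paper's proofs are self-contained and do not use Edmonds' theorem at this stage, while your route is a little quicker but leans on that deeper result (hidden inside Proposition~\ref{prop-denfr}). Either is perfectly acceptable here.
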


\noindent
It is well-known that the difference between the fractional chromatic
number and the integral chromatic number of a graph~$G$ can be arbitrarily
large (\,see, e.g., Scheinerman and Ullman~\cite[Chapter~3]{SU}\,). Because
$\chi(G)=\lceil\chi_c(G)\rceil$, the same holds for the difference between
the fractional chromatic number and the circular chromatic number.
It is for that reason somewhat surprising that the fractional and circular
arboricity of matroids are always equal. 

\begin{theo}\label{t::main2}\mbox{}\\*
  For a loopless matroid~$M$ we have $\UC(M)=\UF(M)=\gamma(M)$.
\end{theo}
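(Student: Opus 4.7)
The plan is to obtain Theorem~\ref{t::main2} as a direct corollary of Theorem~\ref{t::cyclickhi}, combined with the two propositions already established. From Proposition~\ref{prop-denfr} we have $\UF(M)=\gamma(M)$, and Proposition~\ref{prop-crarb} gives $\gamma(M)=\UF(M)\le\UC(M)$. Hence only the reverse inequality $\UC(M)\le\gamma(M)$ needs proof.

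For that reverse inequality, I would apply Theorem~\ref{t::cyclickhi} to $(M,\omega)$ with the all-ones weight function $\omega\equiv 1$ and with $d:=\gamma(M)$. The hypothesis~\eqref{cond2} then becomes $d\ge\omega(A)/r(A)=|A|/r(A)$ for every nonempty $A\subseteq E$, which is precisely the definition of $\gamma(M)$ as the maximum of this ratio. Theorem~\ref{t::cyclickhi} therefore furnishes a mapping $\phi:E\rightarrow S_d$ such that $\Ep(x)$ is independent for every $x\in S_d$.

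It remains to recognise such a $\phi$ as a witness that $\UC(M)\le d$. When $\omega\equiv 1$ the set $\Ep(x)=\{\,e\in E\mid\phi(e)\in(x-1,x]\,\}$ consists of the elements whose images lie in a half-open cyclic unit interval; after the reparametrisation $y=x-1$, its independence for every $x$ is exactly the condition defining $\UC(M)$, modulo the choice of which endpoint of the unit interval is open. As noted in the footnote to the definition of~$\UC(M)$, this endpoint choice does not affect the circular arboricity, so we conclude $\UC(M)\le d=\gamma(M)$, which together with the earlier bound gives equality throughout.

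There is essentially no obstacle here beyond invoking Theorem~\ref{t::cyclickhi}; all the real difficulty is packaged inside that theorem, and the only point that needs the slightest care in the present deduction is matching the half-open interval conventions between the two definitions, which is handled by the footnote.
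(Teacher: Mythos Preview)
Your proposal is correct and follows essentially the same route as the paper: reduce to $\UC(M)\le\gamma(M)$ via Propositions~\ref{prop-denfr} and~\ref{prop-crarb}, then invoke the main cyclic-interval theorem with constant weights and $d=\gamma(M)$. The only cosmetic difference is that the paper applies the integer version (Theorem~\ref{t::cychi-a}) with weights $\omega\equiv Q$ on the $P$-gon (where $\gamma(M)=P/Q$) and then rescales by $1/Q$, whereas you invoke the rational version (Theorem~\ref{t::cyclickhi}) directly with $\omega\equiv 1$; since the paper already states these two theorems as equivalent, and your handling of the half-open endpoint convention via the footnote is exactly right, the arguments are interchangeable.
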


\noindent
This result was conjectured for graphical matroids by Gon\c{c}alves
\cite[Section~3.8]{Gon}. We give the short derivation from
Theorem~\ref{t::cyclickhi} in Section~\ref{sec-main2}.

\section{Proof of Theorem~\ref{t::cyclickhi}}\label{sec2}

We use the notation and conventions from the first section. Since the
weights~$\omega(e)$ and the number~$d$ in the hypothesis of
Theorem~\ref{t::cyclickhi} are assumed to be rational, it is clear that we
can restrict ourselves to mappings from~$E$ to the rational elements
of~$S_d$. This also shows that the theorem is equivalent to
Theorem~\ref{t::cychi-a} below.

A \emph{\dgon}, for a positive integer~$D$, is the sequence
$(1,2,\ldots,D)$ in cyclic order; in other words\,: the integer elements of
the circle~$S_D$. For simplicity, we use $[D]=\{1,\ldots,D\}$ for the
elements of the \dgon, but we must remain aware of the cyclic structure of
the \dgon. In particular, for a mapping $\phi:E\rightarrow[D]$ the cyclic
interval $[\,\phi(e),\,\phi(e)+\omega(e)\,)$ corresponds to the sequence of
integers $\bigl(\phi(e),\phi(e)+1,\ldots,\phi(e)+\omega(e)-1\bigr)$, taken
modulo~$D$.

\begin{theo}\label{t::cychi-a}\mbox{}\\*
  Let $(M,\omega)$ be a loopless weighted matroid with non-negative integer
  weights and~$D$ a positive integer. The following statements are
  equivalent.

  \smallskip
  \qite{\rm(a)}There exists a mapping $\phi:E\rightarrow[D]$ such that for
  every $x\in[D]$, the set
  $\{\,e\in E\mid\linebreak[2]x\in[\,\phi(e),\,\phi(e)+\omega(e)\,)\,\}$ is
  independent.

  \smallskip
  \qite{\rm(b)}For all $A\subseteq E$, we have $\omega(A)\le D\cdot r(A)$.
\end{theo}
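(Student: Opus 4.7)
The easy direction $\mathrm{(a)}\Rightarrow\mathrm{(b)}$ is a straightforward double count: if $\phi$ is valid then for any $A\subseteq E$ and any $x\in[D]$ the set $\Ep(x)\cap A$ is independent in $M|A$, so $|\Ep(x)\cap A|\le r(A)$. Summing over $x\in[D]$ and noting that each $e\in A$ is counted exactly $\omega(e)$ times on the left yields $\omega(A)\le D\cdot r(A)$.

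For the hard direction $\mathrm{(b)}\Rightarrow\mathrm{(a)}$ my plan is an extremal argument based on local cyclic shifts. Among all mappings $\phi:E\rightarrow[D]$, choose one that minimises the total rank deficit
\[
   \Delta(\phi)\;=\;\sum_{x\in[D]}\bigl(\,|\Ep(x)|-r(\Ep(x))\,\bigr),
\]
breaking ties by a secondary measure such as the lexicographic profile $(|\Ep(1)|,\dots,|\Ep(D)|)$. Suppose for contradiction that $\Delta(\phi)>0$. Then some position $x_0$ carries a dependent set $\Ep(x_0)$, which must contain a circuit $C$. The goal is to produce an $e_0\in C$ whose cyclic interval $[\phi(e_0),\phi(e_0)+\omega(e_0))$ can be shifted by one step (forward or backward) so that $\Delta$ strictly decreases, contradicting the minimality of $\phi$.

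The technical heart of the proof is this shift lemma. A forward shift of $e_0$ removes it from $\Ep(\phi(e_0))$ and inserts it into $\Ep(\phi(e_0)+\omega(e_0))$, so it decreases the rank deficit at $x_0$ by exactly $1$ but may increase the deficit at the insertion point. The main obstacle I expect is to show that under hypothesis~(b), not all candidate shifts (over all $e_0\in C$ and both directions) can simultaneously fail: if they did, the ``blocking'' circuits encountered around the cycle should combine into a subset $A\subseteq E$ violating $\omega(A)\le D\cdot r(A)$. Should a direct one-step shift not suffice, a natural fallback is to first invoke Edmonds' matroid partition theorem on the multiset matroid $M'$ obtained by replacing each $e$ by $\omega(e)$ parallel copies --- hypothesis~(b) transfers to $M'$ verbatim --- to obtain a (possibly non-consecutive) placement of copies in which each position is independent, and then to run a secondary rearrangement driven by matroid exchanges that consolidates the $\omega(e)$ colours used by $e$ into a single cyclic interval.
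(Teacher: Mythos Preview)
Your double count for $\mathrm{(a)}\Rightarrow\mathrm{(b)}$ is exactly what the paper does and is fine.

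For $\mathrm{(b)}\Rightarrow\mathrm{(a)}$, what you have written is a plan, not a proof, and the plan has a real gap at precisely the point you flag as ``the technical heart''. First, a small slip: a forward shift of $e_0$ changes $\Ep$ only at the two positions $\phi(e_0)$ and $\phi(e_0)+\omega(e_0)$; it does not touch $\Ep(x_0)$ unless $x_0=\phi(e_0)$. So to get any mileage you must first move from the dependent position $x_0$ back to an element of the circuit whose interval \emph{starts} there. More seriously, once you do that, a single shift of a pushable element can very well leave $\Delta$ unchanged: the deficit drops by~$1$ at the old left endpoint and rises by~$1$ at the new right endpoint whenever $e_0$ lands in the closure of $\Ep(\phi(e_0)+\omega(e_0))$. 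Your proposed way out --- that if every candidate shift is blocked then the blocking circuits assemble into a set $A$ with $\omega(A)>D\cdot r(A)$ --- is asserted, not argued, and I do not see how to make it go through: the blocking circuits live at many different positions and there is no evident submodularity trick that aggregates them into a single violating set. The fallback via Edmonds' partition theorem gives a placement of \emph{copies} of each element into independent colour classes, but turning an arbitrary set of $\omega(e)$ colours into a cyclic interval is exactly the difficulty of the theorem; ``secondary rearrangement driven by matroid exchanges'' is not an argument.

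The paper's proof avoids the deficit functional $\Delta$ entirely. It orders mappings by pointwise inclusion of closures, $\phi\succeq\phi'$ iff $\cl(\Ep(x))\supseteq\cl(E_{\phi'}(x))$ for all $x$, and picks a maximal~$\phi$. The key observation is that pushing a pushable element never moves you down in this order (even when it would not decrease your~$\Delta$), so from a maximal~$\phi$ one can push forever without changing any $\cl(\Ep(x))$. Making the infinite push sequence deterministic, the authors split $E$ into the elements pushed infinitely often ($\Eu$) and those eventually fixed ($\Eb$); both sets are non-empty. A short closure argument then shows $\cl(\Eu)=\Eu$ and that every circuit of every $\Ep(x)$ lies inside $\Eu$. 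This yields a genuine decomposition: $\phi|_{\Eb}$ already satisfies (a) for the contraction $M/\Eu$, and induction on $|E|$ handles the restriction $M\backslash\Eb$; the two are then glued. The essential idea you are missing is that one should \emph{not} insist on strict improvement at each step, but rather allow an infinite sequence of neutral pushes and read off structure from which elements move infinitely often.
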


\noindent
In the remainder we often use $A-e$ for $A\setminus\{e\}$, and $A\cup e$
for $A\cup\{e\}$.

An essential tool in our proof is the closure operator for matroids. In
particular, we will use repeatedly that if $e\in A\subseteq E$, then
$e\in\spn(A-e)$ if and only if~$e$ is contained in a circuit of~$A$. The
set~$A$ \emph{spans~$E$} if $\cl(A)=E$.

\begin{proofpl}{Theorem~\ref{t::cychi-a}}
  For a mapping $\phi:E\rightarrow[D]$ and $e\in E$, write
  $J_\phi(e)=[\,\phi(e),$ $\phi(e)+\omega(e)\,)$. Recall the notation
  $\Ep(x)=\{\,e\in E\mid x\in J_\phi(e)\,\}$, for any $x\in[D]$.

  Suppose first that a mapping~$\phi$ satisfying~(a) exists. For a set
  $A\subseteq E$, count the pairs $(a,x)$ with $a\in A$ and
  $x\in J_\phi(a)$ in two ways. Since each~$J_\phi(a)$ contains~$\omega(a)$
  elements from~$[D]$, there are~$\omega(A)$ such pairs. On the other hand,
  for each $x\in[D]$ we have that $\Ep(x)$ is independent, hence the number
  of $a\in A$ with $x\in J_\phi(a)$ is at most~$r(A)$. This gives that
  there are at most $D\cdot r(A)$ pairs, proving that~(b) holds.

  So we are left to prove (b)\:$\Rightarrow$\:(a). For this, let~$D$
  satisfy the condition in~(b). We prove~(a) by induction on~$|E|$. (\,It
  is trivially true if $|E|=1$.\,)

  If there is an $e\in E$ such that $\omega(e)=0$, then we can remove~$e$
  from the matroid and are done by induction. So we can assume
  $\omega(e)>0$ for all $e\in E$.

  By~(b), $\omega(e)\le D$ for all $e\in E$. Suppose there is an $e\in E$
  with $\omega(e)=D$. Let~$M'$ be the contraction $M/e$, with rank
  function~$r'$ and ground set $E'=E-e$. Since $\omega(e')>0$ for all
  $e'\in E$, (b) guarantees $r(\{e,e'\})=2$ for all $e'\ne e$. Thus~$M'$ is
  loopless. For all $A'\subseteq E'$, we have
  $\omega(A')=\omega(A'+e)-D\le D\cdot r(A'+e)-D=D\cdot (r'(A')+1)-D=
  D\cdot r'(A')$. So we can apply the induction hypothesis on~$M'$\,: there
  is a mapping $\phi:E'\rightarrow[D]$ such that
  $\{\,e'\in E'\mid x\in[\,\phi(e'),\,\phi(e')+\omega(e')\,)\,\}$ is
  independent for every $x\in[D]$. Extend~$\phi$ to~$M$ by setting
  $\phi(e)=1$ (\,or any other element of~$[D]$\,). It is easy to check
  that~$\phi$ satisfies~(a).

  So from now on we assume $1\le\omega(e)\le D-1$ for all $e\in E$.

  Given two mappings $\phi,\phi'$ of~$E$ to the \dgon, we say
  that~\emph{$\phi$ is better than~$\phi '$ }if for every $x\in[D]$,
  $\spnn{E_{\phi'}(x)}\subseteq\spnn{\Ep(x)}$. We also say
  that~\emph{$\phi$ is strictly better than~$\phi'$} if the inclusion is
  strict for some~$x$; while~$\phi$ is \emph{best possible} if no other
  mapping is strictly better.

  Since there are only finitely many mappings to the \dgon, we can choose a
  best possible mapping~$\phi$. Our goal is to prove that~$\phi$
  satisfies~(a) in the theorem. Assume that this is not the case. So there
  is some $x\in[D]$ for which~$\Ep(x)$ is not independent, i.e.,
  $|\Ep(x)|>r(\Ep(x))$, which also means that~$\Ep(x)$ contains a circuit.
  Since $\omega(E)\le D\cdot r(E)$, at the same time there must be a point
  $x'\in[D]$ for which $r(\Ep(x'))<r(E)$, hence~$\Ep(x')$ does not
  span~$E$.

  For an element $e\in E$, a \emph{push of~$e$} consists of
  replacing~$\phi(e)$ by $\phi(e)+1$ (\,modulo~$D$\,) (\,although
  intuitively it is probably more useful to think of it as replacing the
  cyclic interval $[\,\phi(e),\,\phi(e)+\omega(e)\,)$ by
  $[\,\phi(e)+1,\,\phi(e)+\omega(e)+1\,)$\,). We call~$e$ \emph{pushable}
  if~$e$ belongs to a circuit in $\Ep(\phi(e))$. If~$e$ is pushable, then a
  push of~$e$ always results in a better mapping, since the closure of
  $\Ep(\phi(e))$ does not decrease. Moreover, in that case a push gives a
  strictly better mapping if adding~$e$ to $\Ep(\phi(e)+\omega(e)+1)$ does
  increase the closure of that set, i.e., if~$e$ does not belong to a
  circuit of $\Ep(\phi(e)+\omega(e)+1)\cup e$. As~$\phi$ is assumed to be
  best possible, no sequence of pushes should result in a strictly better
  mapping.

  On the other hand, there always are pushable elements. This follows from
  our earlier observation that for some $x\in[D]$, there is a circuit~$C$
  in~$\Ep(x)$. Going back (\,in negative direction\,) along the \dgon,
  starting from~$x$, let~$y$ be the last point for which
  $C\subseteq\Ep(y)$. (\,Such a point must exists, since $\omega(e)\le D-1$
  for all $e\in E$.\,) By the choice of~$y$, there exists $e\in C$ such
  that $\phi(e)=y$; such an~$e$ is pushable.

  {}From now on we assume that we only push elements that are pushable.
  {}From the arguments in the previous paragraphs, there exists an infinite
  sequence of pushes. We make the sequence of pushes deterministic as
  follows. Start with some initial ordering $e_1,\ldots,e_m$ of the
  elements. Every time we push an element, rearrange the ordering by moving
  the pushed element to the back of the sequence. (\,So elements towards
  the end of the ordering have been pushed ``more recently'' than those
  towards the beginning.\,) Whenever we have a choice between pushable
  elements, we always push the first pushable element according to the
  ordering at that moment.

  Considering the deterministic sequence of pushes thus obtained, we call
  $e\in E$ \emph{bounded} if it is pushed a finite number of times;
  otherwise it is \emph{unbounded}. Starting with~$\phi$ this means that
  after a finite number of pushes we obtain a mapping for which all bounded
  elements have reached their final position on the $D$-gon. Continuing
  with the sequence, the sequence of mappings eventually must become
  periodic, say with period~$T$. Let $\phi_1,\ldots,\phi_T$ be the mappings
  occurring in this periodic sequence. We analyse the properties of this
  sequence in some detail.

  We first remark that by the definition of pushable and the assumption
  that~$\phi$ is best possible, each~$\phi_i$ is also best possible. This
  guarantees the following.
  \begin{trivlist}\item[]\textbf{Claim 1}\quad\emph{For all~$i,j$ and all
      $x\in[D]$, $\spnn{\Eps{i}(x)}=\spnn{\Eps{j}(x)}$.}
  \end{trivlist}

  Let~$\Eu$ be the set of unbounded elements and~$\Eb$ the bounded ones. As
  some~$\Eps{1}(x)$ do not span~$E$, $\Eb$ is non-empty. Also, by our
  supposition that we are dealing with an infinite sequence of pushes,
  $\Eu$ is not empty.

  Let~$e$ be an element in~$\Eb$. Set $x_e=\phi_1(e)$. Since~$e$ has
  reached its final position by the time we consider the mappings
  $\phi_1,\ldots,\phi_T$, we have $\phi_i(e)=\phi_1(e)=x_e$ for all~$i$.

  \begin{trivlist}\item[]\textbf{Claim 2}\quad\emph{For all~$i$, $e$ does
      not belong to a circuit of~$\Eps{i}(x_e)$.}
  \end{trivlist}
  Indeed, suppose this is false for some~$i$. Thus~$e$ is pushable
  in~$\phi_i$. Since this holds in each of the (\,infinitely many\,) later
  appearances of~$\phi_i$, eventually~$e$ becomes the first among the
  pushable elements in~$\phi_i$. So~$e$ will eventually be pushed, a
  contradiction.

  \medskip
  By Claim~2, all the pushes of elements from~$\Eps{i}(x_e)$, for any~$i$,
  involve circuits that do not contain~$e$. Using Claim~1 this gives
  $\spnn{\Eps{i}(x_e)-e}=\spnn{\Eps{j}(x_e)-e}$ for all~$i,j$. Similarly,
  $e\notin\spnn{\Eps{i}(x_e)-e}$ for all~$i$.

  Now, additionally, let~$f$ be an element in~$\Eu$. Since~$f$ cycles
  infinitely around the \dgon, there is a~$j$ such that $f\in\Eps{j}(x_e)$.
  But that means trivially that
  $\spnn{(\Eps{j}(x_e)\cup f)-e}=\spnn{\Eps{j}(x_e)-e}$. Using Claim~1 and
  the relations above, this gives for all~$i,j$\,:
  \[\spnn{(\Eps{i}(x_e)\cup f)-e}\:=\:\spnn{(\Eps{j}(x_e)\cup f)-e}\:=\:
  \spnn{\Eps{j}(x_e)-e}\:=\:\spnn{\Eps{i}(x_e)-e}.\]
  Since this holds for all $f\in\Eu$, we obtain
  $\spnn{(\Eps{i}(x_e)\cup\Eu)-e}=\spnn{\Eps{i}(x_e)-e}$. As
  $e\notin\spnn{\Eps{i}(x_e)-e}$ for all~$i$, this gives the following.
  \begin{trivlist}\item[]\textbf{Claim 3}\quad\emph{For all~$i$ and
      $e\in\Eb$, we have $e\notin\spnn{(\Eps{i}(x_e)\cup\Eu)-e}$.}
  \end{trivlist}

  Since $\Eu\subseteq(\Eps{i}(x_e)\cup\Eu)-e$, this immediately leads to
  $e\notin\spn(\Eu)$. But this holds for all $e\in\Eb$, and so
  $\Eb\cap\spn(\Eu)=\varnothing$. We have proved the following claim\,:
  \begin{trivlist}\item[]\textbf{Claim 4}\quad\emph{$\spn(\Eu)=\Eu$.}
  \end{trivlist}

  Next we prove our final claim.
  \begin{trivlist}\item[]\textbf{Claim 5}\quad\emph{For all~$i$ and
      $x\in[D]$, every circuit~$C$ of $\Eps{i}(x)$ is included in~$\Eu$.}
  \end{trivlist}
  For suppose there is an $x\in[D]$ and a circuit~$C$ in $\Eps{i}(x)$ such
  that $C\cap\Eb\ne\varnothing$. Going back (\,in negative direction\,)
  along the \dgon, starting from~$x$, let~$y$ be the last point for which
  $C\cap\Eb\subseteq\Eps{i}(y)$. (\,Here we use again that
  $\omega(e)\le D-1$ for all $e\in E$.\,). By the choice of~$y$, there
  exists $e\in C\cap\Eb$ such that $\phi_i(e)=y$, i.e., $y=x_e$. Since
  $e\in C\subseteq\Eps{i}(x_e)\cup\Eu$, that would give
  $e\in\spnn{(\Eps{i}(x_e)\cup\Eu)-e}$, contradicting Claim~3.

  By Claim~4, the contraction $M/\Eu$ with ground set~$\Eb$ is loopless. By
  Claim~5, we have that $\phi_1|_{\Eb}:\Eb\rightarrow[D]$ satisfies
  condition~(a) for the weighted matroid $(M/\Eu\!,\omega|_{\Eb})$.

  Now consider $M\backslash\Eb$, the submatroid of~$M$ restricted to~$\Eu$,
  and let~$r^U$ be the rank function of this matroid. Since $r^U(A)=r(A)$
  for all $A\subseteq\Eu$, we have that $\omega(A)\le D\cdot r^U(A)$ for
  all $A\subseteq\Eu$. Hence by the induction hypothesis, there exists a
  mapping $\phi^U:E^U\rightarrow[D]$ that satisfies~(a) for the weighted
  matroid $(M\backslash\Eb\!,\omega|_{\Eu})$.

  Combine the mappings $\phi_1|_{\Eb}$ and $\phi'$ to a mapping~$\psi$
  of~$E$ to the \dgon\,:
  \[\psi(e)\:=\:\biggl\{\begin{array}{ll}
    \phi_1|_{\Eb}(e),&\text{if $e\in\Eb$};\\[2pt]
    \phi^U(e),&\text{if $e\in\Eu$}.\end{array}\]
  For each point $x\in[D]$, the set~$E_\psi(x)$ obtained from this mapping
  has the property that $E_\psi(x)\cap\Eb$ is independent in $M/\Eu$
  and $E_\psi(x)\cap\Eu$ is independent in $M\backslash\Eb$. That means the
  whole set~$E_\psi(x)$ is independent in~$M$, proving that~$\psi$
  satisfies condition~(a) for the weighted matroid $(M,\omega)$. This
  completes the proof of the theorem.
\end{proofpl}

\noindent
The next results are just Theorems~\ref{t::cychi-a} and~\ref{t::cyclickhi}
in terms of dual matroids.

\begin{corollary}\label{c::dual-a}\mbox{}\\*
  Let $(M,\omega)$ be a loopless weighted matroid with non-negative integer
  weights and~$D$ a positive integer. The following statements are
  equivalent.

  \smallskip
  \qite{\rm(a)}There exists a mapping $\phi:E\rightarrow[D]$ such that for
  every $x\in[D]$, the set $\Ep(x)$ spans~$E$.

  \smallskip
  \qite{\rm(b)}For all $A\subseteq E$, we have
  $\omega(A)\ge D\cdot(r(E)-r(E\setminus A))$.
\end{corollary}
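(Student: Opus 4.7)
I would deduce Corollary~\ref{c::dual-a} directly from Theorem~\ref{t::cychi-a} by matroid duality, applying that theorem to the dual matroid~$M^*$ with complementary weight function $\omega^*(e)=D-\omega(e)$.

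The heart of the argument is a bijection between mappings. Given $\phi:E\rightarrow[D]$, set $\phi^*(e)=\phi(e)+\omega(e)\pmod{D}$. Then the cyclic interval of~$e$ in $(\phi^*,\omega^*)$ is precisely the complement in the \dgon\ of $J_\phi(e)$, so the set of elements hit at~$x$ by $(\phi^*,\omega^*)$ equals $E\setminus\Ep(x)$. Combined with the standard matroid fact ``$I$ is independent in~$M^*$ iff $E\setminus I$ spans~$E$ in~$M$'', this translates condition~(a) of the corollary for $(M,\omega)$ into condition~(a) of Theorem~\ref{t::cychi-a} for $(M^*,\omega^*)$. Using the dual rank formula $r^*(A)=|A|-r(E)+r(E\setminus A)$, the inequality $\omega^*(A)\le D\cdot r^*(A)$ expands and rearranges to exactly $\omega(A)\ge D(r(E)-r(E\setminus A))$, so~(b) of the corollary and~(b) of Theorem~\ref{t::cychi-a} for $(M^*,\omega^*)$ are also equivalent.

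Two technical hypotheses of Theorem~\ref{t::cychi-a} must be secured before it can be applied: $\omega^*\in\mathbb{N}$, i.e., $\omega(e)\le D$, and $M^*$ loopless, i.e., $M$ without coloops. For the first, I would reduce to $\omega(e)\le D$ by lowering any $\omega(e)>D$ to~$D$: since such an element's interval already covers the entire \dgon\ under every mapping~$\phi$, condition~(a) is unaffected, and a short calculation using $r\bigl((E\setminus A)\cup A_>\bigr)\le r(E\setminus A)+|A_>|$ (where $A_>=\{e\in A:\omega(e)>D\}$) shows that condition~(b) survives the reduction. For the second, a coloop~$e$ of~$M$ has $r(E)-r(E-e)=1$, so condition~(b) applied to $A=\{e\}$ forces $\omega(e)\ge D$ and, after the reduction, $\omega^*(e)=0$. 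Zero-weight elements can be removed from~$M^*$ without affecting either condition, so Theorem~\ref{t::cychi-a} applies to the resulting loopless matroid.

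The main obstacle is not substantive content but careful bookkeeping of the duality — verifying that the complementary-interval construction $\phi\leftrightarrow\phi^*$ is indeed a bijection between the relevant mappings, and that the dual rank identity correctly unfolds~(b) in both directions.
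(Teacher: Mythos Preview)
Your proposal is correct and follows exactly the approach sketched in the paper: apply Theorem~\ref{t::cychi-a} to the dual matroid~$M^*$ with the complementary weight $\omega^*(e)=D-\omega(e)$. The paper compresses this into a single sentence, whereas you spell out the interval--complement bijection $\phi\leftrightarrow\phi^*$, the dual-rank translation of~(b), and---more carefully than the paper itself---the reductions needed to ensure $\omega^*\ge0$ and $M^*$ loopless before Theorem~\ref{t::cychi-a} can be invoked.
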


\begin{proof}
  This follows easily by applying Theorem~\ref{t::cychi-a} to the dual
  matroid~$M^*$ with weight $\omega^*(e)=D-\omega(e)$, for all $e\in E$.
\end{proof}

\noindent
{}From this we can form the dual version of Theorem~\ref{t::cyclickhi}.

\begin{corollary}\mbox{}\\*
  Let $(M,\omega)$ be a weighted matroid and~$d$ a positive rational
  number. There exists a mapping $\phi:E\rightarrow S_d$ such that $\Ep(x)$
  spans~$E$ for every point~$x$ in~$S_d$ if and only if the following
  condition is satisfied\,: for all $A\subseteq E$ with $r(E\setminus
  A)<r(E)$, we have $d\le\dfrac{\omega(A)}{r(E)-r(E\setminus A)}\hskip1pt$.
\end{corollary}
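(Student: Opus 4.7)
The plan is to mirror the rational-to-integer reduction at the start of Section~\ref{sec2} (the one taking Theorem~\ref{t::cyclickhi} to Theorem~\ref{t::cychi-a}) and then invoke Corollary~\ref{c::dual-a} in place of Theorem~\ref{t::cychi-a}. First I would delete all loops of~$M$: a loop~$e$ satisfies $r(E)-r(E-e)=0$ so the proposed inequality is vacuous on any set containing only loops from the difference, and placing a loop anywhere on $S_d$ has no effect on whether $\Ep(x)$ spans~$E$. So we may assume $M$ is loopless. Next, pick a positive integer~$N$ such that $D:=Nd$ is a positive integer and $\omega'(e):=N\omega(e)$ is a non-negative integer for every $e\in E$. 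Exactly as in the argument that reduces Theorem~\ref{t::cyclickhi} to Theorem~\ref{t::cychi-a}, it suffices to consider mappings $\phi$ with rational image; scaling by~$N$ then puts such $\phi:E\rightarrow S_d$ in bijection with mappings $\phi':E\rightarrow[D]$, and the condition that $\Ep(x)$ spans~$E$ for every $x\in S_d$ translates to the condition that $E_{\phi'}(x)$ spans~$E$ for every $x\in[D]$.

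Now apply Corollary~\ref{c::dual-a} to $(M,\omega')$ with parameter~$D$: the existence of such a~$\phi'$ is equivalent to $\omega'(A)\ge D\cdot\bigl(r(E)-r(E\setminus A)\bigr)$ holding for every $A\subseteq E$. Dividing by~$N$ turns this into the equivalent condition $\omega(A)\ge d\cdot\bigl(r(E)-r(E\setminus A)\bigr)$ for every $A\subseteq E$.

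Finally, split this last inequality according to whether $r(E\setminus A)$ equals $r(E)$. If $r(E\setminus A)=r(E)$ the inequality reads $\omega(A)\ge0$, which is automatic, so such~$A$ imposes no constraint. If $r(E\setminus A)<r(E)$ then $r(E)-r(E\setminus A)$ is a positive integer and one may rearrange to $d\le\omega(A)/\bigl(r(E)-r(E\setminus A)\bigr)$, which is exactly the condition stated in the corollary. The only real obstacle is the bookkeeping for the scaling/rationality step (Corollary~\ref{c::dual-a} is stated only for integer weights on an integer $D$-gon); once that is handled and loops are dismissed up front, the result is a routine rearrangement.
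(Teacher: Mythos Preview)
Your proposal is correct and follows exactly the route the paper intends: the paper gives no explicit proof here, merely noting that the result is the dual version of Theorem~\ref{t::cyclickhi} obtained from Corollary~\ref{c::dual-a}, i.e., one applies to Corollary~\ref{c::dual-a} the same rational-to-integer scaling that reduces Theorem~\ref{t::cyclickhi} to Theorem~\ref{t::cychi-a}. Your write-up spells out those details faithfully, and your preliminary removal of loops is a necessary bit of housekeeping (Corollary~\ref{c::dual-a} assumes looplessness while the present statement does not) that the paper leaves implicit.
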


\section{Cyclic Orderings of Matroids}\label{sec-main1}

The main goal of this section is to prove Theorem~\ref{t::main1}. We also
give one corollary.

The following is an equivalent formulation of the theorem.

\begin{theo}\label{t::main1-a}\mbox{}\\*
  Let~$M$ be a loopless matroid of rank~$r$ and with~$m$ elements such that
  $\gcd(r,m)=1$. The following statements are equivalent.

  \smallskip
  \qite{\rm(a)}There exists a cyclic ordering $(e_1,\dots,e_m)$ of the
  elements of~$M$ such that every cyclic interval $(e_i,\dots,e_{i+r-1})$
  of length~$r$ is a base of~$M$.

  \smallskip
  \qite{\rm(b)}For all $A\subseteq E$, we have $r\cdot|A|\le m\cdot r(A)$.
\end{theo}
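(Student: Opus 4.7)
The plan is to deduce Theorem~\ref{t::main1-a} directly from Theorem~\ref{t::cychi-a}. For the easy direction (a)\,$\Rightarrow$\,(b), given a cyclic ordering as in~(a) and any $A\subseteq E$, I would count the pairs $(a,i)$ with $a\in A$ and $a\in(e_i,\ldots,e_{i+r-1})$. Each element of~$A$ belongs to exactly~$r$ such cyclic intervals, so the total is $r\cdot|A|$; on the other hand, each of the $m$ intervals is a base and so contains at most $r(A)$ elements of~$A$, giving at most $m\cdot r(A)$ pairs.

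For the direction (b)\,$\Rightarrow$\,(a), I would apply Theorem~\ref{t::cychi-a} with the constant weight $\omega(e)=r$ for every $e\in E$ and with $D=m$. Condition~(b) of Theorem~\ref{t::main1-a} is then precisely condition~(b) of Theorem~\ref{t::cychi-a}, so it delivers a mapping $\phi:E\to[m]$ such that $\Ep(x)$ is independent for every $x\in[m]$. A double-count gives $\sum_{x\in[m]}|\Ep(x)|=\sum_{e\in E}\omega(e)=rm$, and since $|\Ep(x)|\le r(E)=r$ for all~$x$, equality must hold throughout; hence each $\Ep(x)$ is a base of~$M$.

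It remains to promote the mapping $\phi$ to a genuine cyclic ordering of~$E$, and this is where the coprimality hypothesis enters. Let $s(x)=|\{e\in E:\phi(e)=x\}|$ count the elements starting at position~$x$. With the convention that $\Ep(x)=\{e:\phi(e)\in\{x-r+1,\ldots,x\}\}$ taken cyclically modulo~$m$, the equality $|\Ep(x)|=r$ reads $s(x-r+1)+s(x-r+2)+\cdots+s(x)=r$ for every~$x$. Subtracting the equation for~$x$ from that for~$x+1$ yields $s(x+1)=s(x+1-r)$, i.e.\ $s$ is invariant under the translation $y\mapsto y-r$ on $\mathbb{Z}/m\mathbb{Z}$. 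Since $\gcd(r,m)=1$, that translation generates the whole cyclic group, forcing $s$ to be constant; together with $\sum_x s(x)=|E|=m$ this gives $s\equiv1$, so $\phi$ is a bijection. Writing $e_i=\phi^{-1}(i)$ then yields the required cyclic ordering, because $(e_{x-r+1},e_{x-r+2},\ldots,e_x)=\Ep(x)$ is a base for every $x\in[m]$.

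The only genuinely creative step is the specialisation $\omega\equiv r$, $D=m$ in Theorem~\ref{t::cychi-a}; once this is done, the coprimality hypothesis does exactly the combinatorial work needed to upgrade a general placement into a bijection, so no further obstacle arises.
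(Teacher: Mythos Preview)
Your proof is correct and follows essentially the same approach as the paper: both directions use the same double-counting, and the nontrivial direction applies Theorem~\ref{t::cychi-a} with $\omega\equiv r$ and $D=m$, then uses $\gcd(r,m)=1$ to upgrade~$\phi$ to a bijection. The only cosmetic difference is in this last step: where the paper argues by contradiction that $x\notin\phi(E)$ forces $x+r,x+2r,\ldots\notin\phi(E)$ and hence $\phi(E)=\varnothing$, you phrase the same idea as the recurrence $s(x+1)=s(x+1-r)$ making $s$ constant on~$\mathbb{Z}/m\mathbb{Z}$.
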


\begin{proof}
  Suppose first that a cyclic ordering $(e_1,\dots,e_m)$ satisfying~(a)
  exists. Let $B_1,\ldots,B_m$ be the bases obtained from that ordering,
  hence each element of~$M$ appears in~$r$ (\,cyclically\,) consecutive
  bases of $(B_1,\ldots,B_m)$. So for all $A\subseteq E$, we have
  $r\cdot|A|=|B_1\cap A|+|B_2\cap A|+\cdots+|B_m\cap A|\le m\cdot r(A)$,
  proving that~(b) holds.
  
  Next assume that $r\cdot|A|\le m\cdot r(A)$ for all $A\subseteq E$.
  Setting $\omega(e)=r$ for all $e\in E$, and taking $D=m$, we can apply
  Theorem~\ref{t::cychi-a} to conclude that there exists a mapping
  $\phi:E\rightarrow[m]$ such that for every $x\in[m]$, the set
  $\{\,e\in E\mid x\in(\phi(e),\phi(e)+1,\ldots,\phi(e)+r-1)\,\}$ is
  independent. (\,We use the notation from the paragraph preceding
  Theorem~\ref{t::cychi-a}.\,) We prove that if $\gcd(r,m)=1$, this mapping
  gives a cyclic ordering such that every cyclic interval of length~$r$
  forms a base.

  Since for all $e\in E$, the sequences $(\phi(e),\ldots,\phi(e)+r-1)$ have
  the same length~$r$, it follows immediately that for each cyclic interval
  $I_r(x)=(x,x+1,\ldots,x+r-1)$ of length~$r$ on the $m$-gon, the set of
  elements mapped to~$I_r(x)$ forms an independent set. Notice that since
  $r\cdot|E|=m\cdot r(E)$, the number of elements $e\in E$ that are mapped
  to a cyclic interval~$I_r(x)$ on the $m$-gon is exactly~$r$. Since these
  elements form an independent set, there cannot be more than~$r$. And if
  there would be fewer than~$r$ mapped to~$I_r(x)$, then some other cyclic
  interval would have more than~$r$ elements, which is also impossible.

  We have that~$\phi$ is a mapping of the $m$-element set~$E$ to the
  $m$-gon such that every set of~$r$ consecutive points from the $m$-gon
  intersects $\phi(E)$ in~$r$ points. Suppose that $x\notin\phi(E)$ for
  some $x\in[m]$. Then the $r-1$ points on the $m$-gon consecutive to~$x$
  contain~$r$ elements of $\phi(E)$, hence $x+r\notin\phi(E)$. Repeating
  this argument, we also have $x+2\,r,x+3\,r,\ldots\notin\phi(E)$. Since
  $\gcd(m,r)=1$, this would mean that $\phi(E)$ is empty, a contradiction.
  Thus $x\in\phi(E)$ for all $x\in[m]$. It follows that~$\phi$ is a
  bijection, and hence~$\phi$ corresponds to an ordering of~$E$ along the
  $m$-gon, This is exactly the cyclic ordering we were looking for.
\end{proof}

\noindent
An immediate corollary is the following, also conjectured, without the
condition $\gcd(w,m)=1$, by Kajitani \emph{et al.}~\cite{KUM88}.

\begin{corollary}\label{cor3}\mbox{}\\*
  Let~$M$ be a loopless matroid with~$m$ elements. Suppose~$w$ is a
  positive integer such that $\gcd(w,m)=1$ and such that for all
  $A\subseteq E$, we have $w\cdot|A|\le m\cdot r(A)$. There exists a cyclic
  ordering $(e_1,\dots,e_m)$ of~$E$ such that every cyclic interval
  $(e_i,\dots,e_{i+w-1})$ of~$w$ elements is independent.
\end{corollary}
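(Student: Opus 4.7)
My plan is to reduce the statement to Theorem~\ref{t::main1-a} by passing to the \emph{truncation} of $M$ at rank $w$. Recall that the $w$-truncation $M_w$ has the same ground set $E$ and rank function $r_{M_w}(A) = \min(r(A), w)$; its independent sets are exactly the independent sets of $M$ of size at most $w$, and its bases are exactly the size-$w$ subsets of $E$ that are independent in $M$. Applying the given hypothesis with $A = E$ yields $w m \le m \cdot r(M)$, so $w \le r(M)$ and hence $r(M_w) = w$.

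Next, I would check that $M_w$ satisfies the hypotheses of Theorem~\ref{t::main1-a}. Looplessness is immediate: $M$ has no loops and $w \ge 1$, so every singleton still has rank $1$ in $M_w$. For the density condition $r(M_w) \cdot |A| \le m \cdot r_{M_w}(A)$, that is, $w \cdot |A| \le m \cdot r_{M_w}(A)$, I would split on the value of $r(A)$. If $r(A) \le w$ then $r_{M_w}(A) = r(A)$ and the desired bound is exactly the given hypothesis. If $r(A) > w$ then $r_{M_w}(A) = w$, so the bound reduces to $|A| \le m$, which is trivial. Finally, $\gcd(r(M_w), m) = \gcd(w, m) = 1$ by assumption.

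Thus Theorem~\ref{t::main1-a} applied to $M_w$ yields a cyclic ordering $(e_1, \dots, e_m)$ of $E$ in which every cyclic interval of length $w$ is a base of $M_w$, and therefore a size-$w$ independent subset of $M$. This is exactly the conclusion sought. I do not foresee any real obstacle: the only substantive observation is that truncation preserves the density hypothesis, and this is visible from the two-line case split above.
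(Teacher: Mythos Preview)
Your proposal is correct and follows exactly the same approach as the paper: pass to the truncation $M_w$ and invoke Theorem~\ref{t::main1-a}. The paper's proof is a two-sentence sketch that omits the verifications you spell out (that $w\le r(M)$, that $M_w$ is loopless, and that the density hypothesis carries over via the case split on $r(A)\le w$ versus $r(A)>w$), so your write-up simply fills in details the authors left implicit.
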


\begin{proof}
  Let~$M_w$ be the matroid whose independent sets are the independent sets
  of~$M$ with at most~$w$ elements. Then~$M_w$ is a matroid of rank~$w$,
  and the corollary follows from Theorem~\ref{t::main1-a}.
\end{proof}

\section{Circular Arboricity of Matroids and Related
  Results}\label{sec-main2}

In this final section we prove Theorem~\ref{t::main2}, settling a
conjecture made for graphs by Gon\c{c}alves \cite[page~140]{Gon}. We first
give the short proof of Proposition~\ref{prop-crarb}. As mentioned earlier,
this proof mimics the similar relations for the different types or
chromatic number of graphs.

\begin{proofpl}{Proposition~\ref{prop-crarb}}
  Set $c=\UI(M)$ and take bases $B_1,\ldots,B_c$ covering the elements
  of~$M$. By removing multiple occurrences of an element, we find~$c$
  disjoint independent sets $I_1,\ldots,I_c$. Now for each $e\in E$, if
  $e\in I_i$, then map~$e$ to the point~$i$ on the circle~$S_c$. Thus every
  cyclic unit interval contains exactly one of the independent sets~$I_i$.
  This proves $\UC(M)\le c$.

  Next set $d=\UC(M)$. Suppose $\phi:E\longrightarrow S_d$ satisfies the
  requirements for the circular arboricity. By going round the circle, this
  mapping gives an ordering $(e_1,\ldots,e_m)$ of~$E$ (\,by taking
  $(e_1,\ldots,e_m)$ such that
  $0\le\phi(e_1)\le\phi(e_2)\le\cdots\le\phi(e_m)<d$, braking ties
  arbitrarily\,). With each element~$e_i$ we associate an independent set
  $I_i=\{\,e\in E\mid\phi(e)\in[\,\phi(e_i),\,\phi(e_i)+1\,)\,\}$. If
  necessary, add arbitrarily chosen extra elements to extend~$I_i$ to a
  base~$B_i$. For $i=1,\ldots,m$, let~$x(B_i)$ be equal to the difference
  between $\phi(e_{i+1})$ and $\phi(e_i)$ (\,measuring by going from
  $\phi(e_i)$ to $\phi(e_{i+1})$ in positive direction\,), where we take
  $e_{m+1}=e_1$. Note that~$x(B_i)$ is zero if $\phi(e_i)=\phi(e_{i+1})$.
  For all other bases~$B$ of~$M$, set $x(B)=0$. It is easy to check that
  for all $e\in E$, we have $\sum\limits_{B\ni e}x(B)\ge1$, as well as
  $\sum\limits_{B\in\mathcal{B}(M)}x(B)=d$. This proves $\UF(M)\le d$.

  Continuing from the previous paragraph, take $d_0=\lceil d\rceil$. For
  $i=1,\ldots,d_0-1$ set $I'_i=\{\,e\in E\mid\phi(e)\in[\,i-1,\,i\,)\,\}$,
  while $I'_{d_0}=\{\,e\in E\mid\phi(e)\in[\,d_0-1,\,d\,)\,\}$. Then
  $I'_1,\ldots,I'_{d_0}$ is a collection of independent sets covering~$E$.
  We can extend these sets to bases covering~$E$, showing that
  $\lceil\UC(M)\rceil\ge\UI(M)$. Since we saw already $\UC(M)\le\UI(M)$,
  and as $\UI(M)$ is an integer, we must have $\UI(M)=\lceil\UC(M)\rceil$.
\end{proofpl}

\begin{theo}\label{t::main2-a}\mbox{}\\*
  For a loopless matroid~$M$ we have $\gamma(M)=\UF(M)=\UC(M)$.
\end{theo}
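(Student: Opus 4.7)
The plan is to derive Theorem~\ref{t::main2-a} directly from Theorem~\ref{t::cyclickhi}, combined with the easy inequalities in Proposition~\ref{prop-crarb} and the identity from Proposition~\ref{prop-denfr}. Since Proposition~\ref{prop-crarb} already gives $\UF(M)\le\UC(M)$ and Proposition~\ref{prop-denfr} gives $\UF(M)=\gamma(M)$, the only content left to establish is the upper bound $\UC(M)\le\gamma(M)$.

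To obtain this upper bound, I would apply Theorem~\ref{t::cyclickhi} to the matroid $M$ with the uniform weight $\omega(e)=1$ for every $e\in E$ and with $d=\gamma(M)$. With these choices, condition~\eqref{cond2} reads $d\ge|A|/r(A)$ for every non-empty $A\subseteq E$, which is exactly the definition of the maximal density $\gamma(M)$. Hence the hypothesis of Theorem~\ref{t::cyclickhi} is satisfied, and we obtain a mapping $\phi:E\rightarrow S_d$ such that $\Ep(x)$ is independent for every $x\in S_d$.

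It remains only to observe that, with $\omega\equiv1$, the condition that $\Ep(x)=\{\,e\in E\mid x\in[\phi(e),\phi(e)+1)\,\}$ be independent for every $x\in S_d$ says exactly that the preimage under $\phi$ of any cyclic unit interval of $S_d$ is independent; up to the left-open/right-closed convention for endpoints, this is precisely the condition defining the circular arboricity. By the footnote following the definition of $\UC$, swapping the endpoint convention yields the same value of~$\UC(M)$, so the existence of $\phi$ gives $\UC(M)\le d=\gamma(M)$. Together with Proposition~\ref{prop-crarb} and Proposition~\ref{prop-denfr}, this forces $\gamma(M)=\UF(M)=\UC(M)$.

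Since Theorem~\ref{t::cyclickhi} does all the real work, I do not expect any substantive obstacle here; the only minor point is the cosmetic matching of interval-endpoint conventions between the two definitions, which is exactly what the footnote in the paper was designed to handle.
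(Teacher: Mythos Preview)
Your proof is correct and follows essentially the same route as the paper: reduce to $\UC(M)\le\gamma(M)$ via Propositions~\ref{prop-denfr} and~\ref{prop-crarb}, then apply the main theorem with constant weights and $d=\gamma(M)$. The only cosmetic difference is that you invoke the rational form (Theorem~\ref{t::cyclickhi}) with $\omega\equiv1$ directly, whereas the paper applies the integer form (Theorem~\ref{t::cychi-a}) with $\omega\equiv Q$ and $D=P$ where $\gamma(M)=P/Q$ and then rescales by~$1/Q$; your handling of the endpoint convention via the footnote is also a bit more explicit than the paper's.
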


\begin{proof}
  By Propositions~\ref{prop-denfr} and~\ref{prop-crarb} it is enough to
  prove that $\UC(M)\le\gamma(M)$. Take positive integers~$P,Q$ such that
  $\gamma(M)=\dfrac{P}{Q}$. Give weight $\omega(e)=Q$ to all $e\in E$. For
  all $A\subseteq E$ we have $|A|\le\gamma(M)\cdot r(A)$, hence
  $\omega(A)\le P\cdot r(A)$. By Theorem~\ref{t::cychi-a} there is a
  mapping~$\phi$ of~$E$ to the $P$-gon such that for every $x\in[P]$, the
  set $\{\,e\in E\mid x\in[\,\phi(e),\,\phi(e)+Q-1\,)\,\}$ is independent.
  That is equivalent to saying that for every point~$x$ of the $P$-gon,
  $\{\,e\in E\mid\phi(e)\in[\,x,\,x+Q-1\,)\,\}$ is independent. Now we
  define the mapping $\psi:E\rightarrow S_{P/Q}$ by setting
  $\psi(e)=\phi(e)/Q$, for all $e\in E$. Then~$\psi$ has the property that
  for every point~$y$ of~$S_{P/Q}$, the elements of~$E$ mapped to
  $[\,y,y+1\,)$ form an independent set. This shows that $\UC(M)\le
  P/Q=\gamma(M)$ and completes the proof.
\end{proof}

\noindent
We can use exactly the same idea as in the proof above, but using
Corollary~\ref{c::dual-a} instead of Theorem~\ref{t::cychi-a}, to obtain
the following result.

\begin{theo}\label{t::main2-a-dual}\mbox{}\\*
  Let~$M$ be a matroid and~$d$ a real number such that for all
  $A\subseteq E$ with $r(E\setminus A)<r(e)$, we have
  $d\le\dfrac{|A|}{r(E)-r(E\setminus A)}\hskip1pt$. There exists a
  mapping~$\phi$ of~$E$ to the circle~$S_d$ such that for every point~$x$
  of the circle, the set $\{\,e\in E\mid\phi(e)\in[\,x,x+1\,)\,\}$
  spans~$E$.
\end{theo}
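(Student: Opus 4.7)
The plan is to mirror the derivation of Theorem~\ref{t::main2-a} from Theorem~\ref{t::cychi-a}, replacing the latter with Corollary~\ref{c::dual-a}. First I would reduce to the maximal value of $d$ satisfying the hypothesis, namely
\[d^*\:=\:\min\biggl\{\,\frac{|A|}{r(E)-r(E\setminus A)}\::\:A\subseteq E,\ r(E\setminus A)<r(E)\,\biggr\},\]
which is rational. For any $d\le d^*$, if $\psi^*:E\to S_{d^*}$ has the spanning property, then the rescaling $\psi(e)=(d/d^*)\,\psi^*(e)$ maps $E$ to $S_d$; each unit window $[y,y+1)$ of $S_d$ pulls back to a window of length $d^*/d\ge1$ in $S_{d^*}$, which contains some unit sub-window, so the preimage $\psi^{-1}([y,y+1))$ contains a spanning subset and therefore spans~$E$. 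Thus it suffices to handle $d=d^*$.

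Write $d^*=P/Q$ with positive integers $P,Q$ and assign the constant weight $\omega(e)=Q$ to every $e\in E$. For $A\subseteq E$ with $r(E\setminus A)<r(E)$ the inequality $d^*\le|A|/(r(E)-r(E\setminus A))$ rearranges to $\omega(A)=Q\,|A|\ge P\,(r(E)-r(E\setminus A))$; for $A$ with $r(E\setminus A)=r(E)$ the right-hand side is zero and the inequality is trivial. So condition~(b) of Corollary~\ref{c::dual-a} is satisfied with $D=P$, yielding a mapping $\phi:E\to[P]$ such that $\Ep(x)=\{\,e\in E\mid x\in[\phi(e),\phi(e)+Q)\,\}$ spans~$E$ for every integer $x\in[P]$.

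Now set $\psi(e)=\phi(e)/Q$, so that $\psi:E\to S_{P/Q}$. For any point $y$ of $S_{P/Q}$, the set $\{\,e\mid\psi(e)\in[y,y+1)\,\}$ equals $\{\,e\mid\phi(e)\in[Qy,Qy+Q)\,\}$; reading this window cyclically in $[P]$, it meets the integer lattice in exactly the $Q$ consecutive points $\lceil Qy\rceil,\ldots,\lceil Qy\rceil+Q-1$, which are precisely the values of $\phi(e)$ appearing in $\Ep(x)$ for $x=\lceil Qy\rceil+Q-1$. Hence $\{\,e\mid\psi(e)\in[y,y+1)\,\}=\Ep(x)$ spans~$E$, as required. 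The only thing that needs real care is this bookkeeping in the rescaling step; the rest is symbol-for-symbol the proof of Theorem~\ref{t::main2-a}, with independence and ``$\le$'' replaced by spanning and ``$\ge$''. I do not foresee any serious obstacle.
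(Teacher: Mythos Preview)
Your proposal is correct and follows essentially the same route the paper indicates: apply Corollary~\ref{c::dual-a} with constant weights $\omega(e)=Q$ and $D=P$, then rescale by $1/Q$ exactly as in the proof of Theorem~\ref{t::main2-a}, with ``independent'' and ``$\le$'' replaced by ``spanning'' and ``$\ge$''. Your extra reduction step---passing first to the extremal rational value $d^*$ and then shrinking to an arbitrary $d\le d^*$ via the rescaling $\psi=(d/d^*)\psi^*$---is a clean way to handle the fact that the theorem is stated for real~$d$; the paper's sketch leaves this implicit.
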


\noindent
Because Gon\c{c}alves formulated his original conjecture in terms of
graphs, we give the corollaries of the last two theorems for the case of
graphical matroids. For a graph~$G$, let~$V(G)$ denote the set of vertices,
$E(G)$ the set of edges, and~$c(G)$ the number of components of~$G$.

\clearpage
\begin{corollary}\label{cor4}\mbox{}\\*
  Let~$G$ be a graph and~$d$ a real number such that for every subgraph~$H$
  of~$G$ with at least two vertices, we have
  $d\ge\dfrac{|E(H)|}{|V(H)|-1}\hskip1pt$. There exists a mapping~$\phi$ of
  the edge set~$E(G)$ to the circle~$S_d$ such that for every point~$x$ of
  the circle, $\{\,e\in E(G)\mid\phi(e)\in[\,x,x+1\,)\,\}$ forms an acyclic
  subgraph of~$G$.
\end{corollary}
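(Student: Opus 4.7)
The plan is to apply Theorem~\ref{t::main2-a} to the cycle matroid $M(G)$, whose independent sets are precisely the acyclic edge subsets of~$G$, and whose rank function is $r(A) = |V(H_A)| - c(H_A)$, where $H_A$ denotes the subgraph of~$G$ with edge set~$A$ and vertex set equal to the endpoints of~$A$.

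The first step is to check that the hypothesis on~$d$ forces $\gamma(M(G)) \le d$. Given any non-empty $A \subseteq E(G)$, write $H_A$ as the disjoint union of its connected components $H^1,\ldots,H^c$; each $H^i$ contains at least one edge, hence has at least two vertices. The mediant inequality then yields
\[
\frac{|A|}{r(A)} \;=\; \frac{\sum_{i=1}^c |E(H^i)|}{\sum_{i=1}^c (|V(H^i)|-1)} \;\le\; \max_{i} \frac{|E(H^i)|}{|V(H^i)|-1} \;\le\; d,
\]
where the last step invokes the corollary's hypothesis on each connected subgraph~$H^i$. Taking the maximum over non-empty~$A$ gives $\gamma(M(G)) \le d$.

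By Theorem~\ref{t::main2-a} we have $\UC(M(G)) = \gamma(M(G))$, so there exists a rational $d' \le d$ and a mapping $\psi: E(G) \to S_{d'}$ such that the edges sent to any cyclic unit interval of $S_{d'}$ form an acyclic subgraph. To transfer this to a mapping into $S_d$, I would set $\phi(e) = (d/d')\,\psi(e)$; then any cyclic unit interval $[x,x+1)$ of $S_d$ pulls back under this rescaling to a cyclic interval of length $d'/d \le 1$ in $S_{d'}$, which is contained in some cyclic unit interval of $S_{d'}$. Hence $\{\,e \mid \phi(e) \in [x,x+1)\,\}$ is a subset of an acyclic edge set of~$G$, and is therefore itself acyclic, as required. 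The argument is essentially routine; the only mild subtlety is this final rescaling step, needed because $\UC$ attains its infimum only at the rational value $\gamma(M(G))$, whereas the statement allows arbitrary real $d$ above this threshold.
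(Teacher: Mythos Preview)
Your proof is correct and follows the paper's intended route: the paper presents Corollary~\ref{cor4} simply as the specialisation of Theorem~\ref{t::main2-a} to the cycle matroid $M(G)$, without writing out any further argument. Your mediant-inequality step (translating the subgraph hypothesis into the bound $\gamma(M(G))\le d$) and your final rescaling from $S_{d'}$ to $S_d$ (to accommodate arbitrary real $d\ge\gamma(M(G))$) make explicit two details the paper leaves to the reader.
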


\begin{corollary}\label{cor5}\mbox{}\\*
  Let~$G$ be a connected graph and~$d$ a positive real number such that for
  every set of edges~$A$ that is a cut in~$G$, we have
  $d\le\dfrac{|A|}{c(G-A)-1}\hskip1pt$. There exists a mapping~$\phi$ of
  the edge set~$E(G)$ to the circle~$S_d$ such that for every point~$x$,
  the set $\{\,e\in E(G)\mid\phi(e)\in[\,x,x+1\,)\,\}$ forms a connected
  spanning subgraph of~$G$.
\end{corollary}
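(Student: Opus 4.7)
The plan is to deduce Corollary~\ref{cor5} directly from Theorem~\ref{t::main2-a-dual} applied to the cycle matroid $M=M(G)$, in the same way that Corollary~\ref{cor4} is a graph-theoretic restatement of Theorem~\ref{t::main2-a}. Essentially all the work has already been done in the matroid setting, so what remains is simply a translation between matroid-theoretic and graph-theoretic quantities.

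First I would recall that $M(G)$ has ground set $E(G)$ and rank function $r(A)=|V(G)|-c_G(A)$, where $c_G(A)$ denotes the number of connected components of the spanning subgraph $(V(G),A)$. Since $G$ is connected we have $r(E(G))=|V(G)|-1$, and for any $A\subseteq E(G)$ a short computation gives
\[r(E(G))-r(E(G)\setminus A)\:=\:c(G-A)-1.\]
In particular, the matroid condition $r(E\setminus A)<r(E)$ is equivalent to $c(G-A)\ge 2$, which is exactly the condition that $A$ is a cut of~$G$. Under this dictionary the inequality $d\le|A|/(r(E)-r(E\setminus A))$ becomes the hypothesised bound $d\le|A|/(c(G-A)-1)$, and so the hypothesis of Theorem~\ref{t::main2-a-dual} is met for $(M(G),d)$.

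Applying Theorem~\ref{t::main2-a-dual} then yields a mapping $\phi:E(G)\to S_d$ such that for every point~$x$ of~$S_d$, the set $\Ep(x)=\{\,e\in E(G)\mid\phi(e)\in[\,x,x+1\,)\,\}$ spans $E(G)$ in~$M(G)$. But spanning in $M(G)$ means $r(\Ep(x))=|V(G)|-1$, i.e., $c_G(\Ep(x))=1$, which says precisely that $\bigl(V(G),\Ep(x)\bigr)$ is a connected spanning subgraph of~$G$. Hence $\phi$ is the mapping required by the corollary. There is no real obstacle in the argument; the one point that needs to be kept in mind is that ``cut'' in the hypothesis should be read as any edge set whose removal disconnects~$G$, so that the quantifications in the matroid statement and in the graph statement coincide exactly.
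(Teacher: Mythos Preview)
Your proof is correct and is exactly the approach the paper intends: Corollary~\ref{cor5} is simply Theorem~\ref{t::main2-a-dual} specialised to the cycle matroid~$M(G)$, using the standard dictionary $r(E)-r(E\setminus A)=c(G-A)-1$ and the fact that spanning in~$M(G)$ means connectedness of the corresponding spanning subgraph. Your remark that ``cut'' must be read as any disconnecting edge set (not just a minimal one) is precisely the caveat needed for the two quantifications to match.
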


\noindent
Theorem~\ref{t::main2-a} generalises a result of Edmonds~\cite{Edm1}, while
Theorem~\ref{t::main2-a-dual} generalises another result of
Edmonds~\cite{Edm2}. Their graphical versions, Corollaries~\ref{cor4}
and~\ref{cor5}, generalise classical results of Nash-Williams~\cite{NW2},
and of Nash-Williams~\cite{NW1} and Tutte~\cite{Tut}, respectively.

\subsection*{Acknowledgement}

The authors thank the anonymous referees for comments and suggestions that
greatly improved the structure and clarity of the paper. We also like to
thank Michel Goemans for pointing out a serious error in an earlier version
of this paper (\,in which we claimed optimistically to have proved
Conjecture~\ref{c::main1}\,). Similar thanks to L\'aszl\'o V\'egh for
remarks that improved the presentation of the proof of
Theorem~\ref{t::cyclickhi}.


\begin{thebibliography}{11}

\bibitem{CGHL} \textsc{P.A. Catlin, J.W. Grossman, A.M. Hobbs, and H.-J.
    Lai}, \textsl{Fractional arboricity, strength, and principal partitions
    in graphs and matroids}. Discrete Appl.\ Math.~\textbf{40} (1992)
  285--302.

\bibitem{CM} \textsc{R. Cordovil and M.L. Moreira}, \textsl{Bases-cobases
    graphs and polytopes of matroids}. Combinatorica~\textbf{13} (1993),
  157--165.

\bibitem{Edm1} \textsc{J. Edmonds}, \textsl{Minimum partition of a matroid
    into independent subsets}. J.~Res.\ Nat.\ Bur.\ Standards
  Sect.~B~\textbf{69B} (1965) 67--72.

\bibitem{Edm2} \textsc{J. Edmonds}, \textsl{Lehman's switching game and a
    theorem of Tutte and Nash-Williams}. J.~Res.\ Nat.\ Bur.\ Standards
  Sect.~B~\textbf{69B} (1965) 73--77.

\bibitem{FH} \textsc{P. Fraisse and P. Hell}, \textsl{Equicovering matroids
    by distinct bases}. European J.~Combin.~\textbf{16} (1995), 159--162.

\bibitem{Gab} \textsc{H. Gabow}, \textsl{Decomposing symmetric exchanges in
    matroid bases}. Math.\ Programming~\textbf{10} (1976), 271--276.

\bibitem{Gon} \textsc{D. Gon\c{c}alves}, \textsl{\'Etude de diff\'erents
    Probl\`emes de Partition de Graphes}. PhD Thesis, Universit\'e
  Bordeaux~I, 2006.

\bibitem{KUM88} \textsc{Y. Kajitani, S. Ueno, and H. Miyano},
  \textsl{Ordering of the elements of a matroid such that its
    consecutive~$w$ elements are independent}. Discrete Math.~\textbf{72}
  (1988) 187--194.

\bibitem{NW1} \textsc{C.St.J.A. Nash-Williams}, \textsl{Edge-disjoint
    spanning trees of finite graphs}. J.~London Math.\ Soc.~\textbf{36}
  (1961) 445--450.

\bibitem{NW2} \textsc{C.St.J.A. Nash-Williams}, \textsl{Decomposition of
    finite graphs into forests}. J.~London Math.\ Soc.~\textbf{39} (1964)
  12.

\bibitem{JO} \textsc{J. Oxley}, \textsl{Matroid Theory}. Oxford University
  Press, New York, 1992.

\bibitem{SU} \textsc{E.R. Scheinerman and D.H. Ullman}, \textsl{Fractional
    Graph Theory}. Wiley, New York, 1997.

\bibitem{Tut} \textsc{W.T. Tutte}, \textsl{On the problem of decomposing a
    graph into~$n$ connected factors}. J.~London Math.\ Soc.~\textbf{36}
  (1961) 221--230.

\bibitem{DW84} \textsc{D. Wiedemann}, \textsl{Cyclic base orders of
    matroids}. Manuscript, 2006. Retrieved 23~April 2007 from
  \url{http://www.plumbyte.com/cyclic_base_orders_1984.pdf}.\\
  Earlier version\,: \textsl{Cyclic ordering of matroids}. Unpublished
  manuscript, University of Waterloo, 1984.

\bibitem{Zhu} \textsc{X. Zhu}, \textsl{Circular chromatic number: A
    survey}. Discrete Math.~\textbf{229} (2001) 371--410.

\bibitem{Zhu2} \textsc{X. Zhu}, \textsl{Recent developments in circular
    colouring of graphs}. In\,: \textsc{M.~Klazar, J.~Kratochv{\'\i}l,
    M.~Loebl, J.~Matou{\v{s}}ek, R.~Thomas, and P.~Valtr}, \textsl{Topics
    in Discrete Mathematics}, Springer, Berlin (2006) 497--550.

\end{thebibliography}
\end{document}